\title{Multi-dimensional scalar balance laws with discontinuous flux}
\author{Piotr Gwiazda
\and Agnieszka \'Swierczewska-Gwiazda
\and Petra Wittbold
\and Aleksandra Zimmermann}
\date{}
\begin{document}
\maketitle \maketitle
\newtheorem{definition}{Definition}[section]
\newtheorem{theorem}{Theorem}[section]
\newtheorem{lemma}[theorem]{Lemma}
\newtheorem{corollary}[theorem]{Corollary}
\newtheorem{proposition}[theorem]{Proposition}
\newtheorem{Rem}[theorem]{Remark}

\renewcommand{\theequation}{\thesection.\arabic{equation}}
\newcommand{\nc}{\newcommand}
\nc{\V}{{\cal V}} 
\nc{\M}{{\cal M}} 
\nc{\T}{{\cal T}}
\nc{\D}{{\cal D}} 
\nc{\R}{{\mathbb R}} 
\nc{\N}{{\mathbb N}}
\nc{\qed}{\mbox{}\nolinebreak\hfill \rule{2mm}{2mm}} 
\nc{\weak}{\rightharpoonup}
\nc{\weakstar}{\stackrel{\ast}{\rightharpoonup}} 
\nc{\proof}{{\bf Proof: }} 
\def\bbbone{{\mathchoice {\rm 1\mskip-4mu l}
{\rm 1\mskip-4mu l} {\rm 1\mskip-4.5mu l} {\rm 1\mskip-5mu l}}}
\renewcommand{\div}{{\mathrm{div}}\,}
\def\Rdp{\mathbb{R}_+\times \mathbb{R}^{N}}
\def\bG{A}
\def\prob{\mathop{\mathrm{Prob}}\nolimits}
\def\sgn{\mathop{\mathrm{sgn}}\nolimits}
\def\supp{\mathop{\mathrm{supp}}\nolimits}
\def\bF{A}
\def\bQ{Q}

\begin{abstract}
We consider the problem of existence of entropy weak solutions to {scalar balance} laws with a dissipative 
source term. The flux function may be discontinuous with respect both to the space variable $x$ and the unknown quantity~$u$. The problem is formulated in the framework of multi-valued mappings. We use the notion of entropy-measure valued solutions to prove the so-called contraction principle and comparison principle.
 \\[2ex]
{\bf AMS 2000 Classification: 35L65, 35R05} 
\\[2ex]
{\bf Keywords}: scalar balance laws, entropy weak solutions, entropy measure-valued solutions, semi-Kru\v zkov entropies, comparison principle, discontinuous flux, implicit constitutive relation
 \end{abstract}

\section{Introduction}

Our interest is directed to the following Cauchy problem describing the evolution of $u:\R_+\times\R^N\to\R$
\begin{align}\label{E1}
 u_t + \div \Phi(x,u)\ni f(t,x,u)&\quad \mbox{ on } \R_+\times \R^N,\\ \label{E0}
u(0,\cdot)=u_0 &\quad\mbox{ on }  \R^N.
\end{align}
where $\Phi:\R^N\times\R\to2^{\R^N}$ is a multi-valued mapping and   
$f:\R_+\times\R^N\times\R\to\R$ is a source term. Moreover   $u_0:\R^N\to\R$  is a given  initial data. The assumptions for $\Phi$ and $f$ shall be presented below. The formulation of the problem in the language of multi-valued flux function allows to capture 
relations which are not necessarily functions.

{We will assume that the flux function is in the form 
of a composition, which allows, with an appropriate change of variables, 
to formulate the definition of entropy weak solutions in terms of the new variables}. An important property of such defined solutions is that in  case of smooth fluxes they correspond to the classical definition of entropy weak solutions, see e.g. Kru\v zkov~\cite{Kr70}. We assume about $\Phi$ and $f$  that:
\begin{enumerate}
\item[(H1)] $\Phi(x,u)$ is a multi-valued mapping given by the formula $\Phi(x,u)=A(\theta(x,u))$ where
$A: \R\to\R^N$, $A$ is continuous  and 
 $\theta : \R^N \times \R \to 2^{\R}\setminus \emptyset $ is a multi-valued mapping such that, for almost every $ x \in \R^N$, $\theta(x,\cdot) : \R \to 2^{\R}\setminus \emptyset $ is a maximal monotone operator with $0\in \theta (x, 0)$.
The inverse to $\theta$ (w.r.t $u$), which we call 
$\eta$, is continuous. 
 Moreover, we assume that 
\begin{equation}\label{betaas}
\theta^*(\cdot,l) \in L^1(\R^N)
\end{equation}
for each $l \in \R$, where  $\theta^*$ denotes the minimal selection of the graph of $ \theta $.
\item[(H2)] there exist continuous functions $h_1$ and $h_2$ with {$\lim_{|u|\to\infty}h_1(u)=\infty$ such that\begin{equation}\label{h1h2}
h_1(u)\le |\overline{\theta}|\le h_2(u) 
\end{equation}
for all $\overline{\theta}\in \theta(x,u)$, almost every $x\in\mathbb{R}^N$ and all $u\in\R$}
\item[(H3)]  there exists $1\le p\le \frac{N}{N-1}$ and  constants $R_\infty >0$ and $C_\infty>0$ such that for all $x>R_\infty$
$$|A(s)|^p\le C_\infty |\eta(x,s)|$$ 
\item[(H4)]  $f(\cdot,\cdot,u)\in L^1_{loc}(\R_+\times\R^N)$ for all $u\in\R$; $f(t,x,\cdot)$ is continuous and $f(t,x,0)=0$ for a.a. $(t,x)\in\R_+\times\R^N$. Moreover $f$ is dissipative ($-f$ is monotone w.r.t. the last variable), i.e.,  
\begin{equation}\label{A4}
(f(t,x,u)-f(t,x,v))(u-v)\le0\quad \mbox{for all } u,v\in \R \mbox{ and a.a. } (t,x)\in \R_+\times \R^N
\end{equation}
\end{enumerate}
\begin{Rem}
One could consider {a} more general source term, namely for almost all $(t,x)\in\R_+\times\R^N$ a maximal monotone (possibly multi-valued) mapping $f$. Then we would rewrite \eqref{E1} as $u_t+\div\Phi(x,u)-f(t,x,u)\ni 0$. The scalar conservation laws with a multi-valued source term were considered e.g. in \cite{GwSw2005}.
\end{Rem}
The approach of considering  the flux function in form of a composition was used by Panov in~\cite{Panov} to solve the problem of well-posedness for a scalar conservation law {without source term (i.e. $f=0$) and a} flux function discontinuous with respect to~$x$.
More precisely, the author assumed that $\Phi(x,u)=A(\theta(x,u))$, where $A\in{\cal C}(\R;\R^N)$ and 
$\theta:\R^N\times\R\to\R$ is a Carath\'eodory function, which is for almost all $x\in\R^N$ strictly increasing with respect to $u$. Moreover the same condition as (H2) was assumed. Hence if  $\eta(x,v)$ is the  inverse to $\theta$, i.e., $\theta(x,\eta(x,v))=v$ then $u$ is a solution to \eqref{E1}--\eqref{E0}  {with $f=0$} if there exists $v$ such that  {$u=\eta(x,v)$ and} the following entropy inequality is satisfied in the distributional sense in $\R_+\times\R^N$ for all $k\in\R$
\begin{equation}
|\eta(x,v)-\eta(x,k)|_{t} \!+ \div ({\rm sgn}\  (v-k) (\bG(v)-\bG(k)))\le 0.
\end{equation}

The corresponding approach we find for fluxes discontinuous only with respect to~$u$ in the paper by Carrillo,~\cite{Carrillo2003}.
The author studied  the problem
in a bounded domain
 \begin{equation}
 \begin{split}
 u_t+\div \Phi(u)\ni f&\quad{\rm in}\  (0,T)\times\Omega\\
 u(0)=u_0&\quad{\rm in}\ \Omega
 \end{split}
 \end{equation}
 under the assumption that $\Phi$ is allowed to have  discontinuities of first type on a  finite subset of $\R$.   
 After a change of variables the author deals with the following problem
  \begin{equation}
 \begin{split}
 g(v)_t+\div \Psi(u)=f&\quad{\rm in}\  (0,T)\times\Omega,\\
 g(v(0))=u_0&\quad{\rm in}\ \Omega.
 \end{split}
 \end{equation}
  The proof  of existence of solutions bases upon the comparison principle and the entropy inequality involving a version of  semi Kru\v zkov entropies, namely $E(v,k)=(g(v)-g(k))^+$. 
  
 The  similar problem was considered in  Bul\'{\i}\v{c}ek et al.~\cite{BuGwMaSw2011} 
 with the use of different approach, namely 
  \begin{equation}
 \begin{split}
 u_t+\div \Phi(u)= 0&\quad{\rm in}\  \R_+\times\R^N,\\
 u(0)=u_0&\quad{\rm in}\ \R^N.
 \end{split}
 \end{equation}
 The authors showed existence and uniqueness of entropy weak solutions for jump continuous $\Phi$ (i.e. having countable, not necessarily finite,   number of jumps). For the proof they essentially used the method of entropy measure-valued solutions introduced by DiPerna, cf.~\cite{DiPerna} and later extended by Szepessy in \cite{Sz89a}. To handle the discontinuity of the flux function Bul\'{\i}\v{c}ek et al. showed existence of a parametrization $U$, namely a nondecreasing function such that $\Phi\circ U$ is continuous. 
 
 These ideas are combined in~\cite{BuGwSw2013}, where the authors treat the case of a flux function discontinuous in $x$ and $u$ for the problem 
\begin{equation}
  \begin{split}
 u_t+\div \Phi(x,u)= 0&\quad{\rm in}\  \R_+\times\R^N,\\
 u(0)=u_0&\quad{\rm in}\ \R^N.
 \end{split}\label{Bu}
 \end{equation}
The set of assumptions corresponds to the one formulated by Panov in~\cite{Panov}, namely 
$\Phi(x,u)=A(\theta(x,u))$ extended by the possibility that $A$ is a jump continuous function. Again through appropriate estimates for entropy measure-valued solutions and finding the pa\-ra\-me\-tri\-za\-tion $U$ the authors showed well-posedness for \eqref{Bu}. Both in \cite{BuGwSw2013}  and  \cite{BuGwMaSw2011} the uniqueness of entropy weak solutions needs to be understood up to the level sets of the pa\-ra\-me\-tri\-za\-tion $U$. This is also related with a restricted family of entropies which are allowed, what we will discuss in more detail after the statement of definition and main theorem. 

In the present paper we have added a source term, which requires additional attention in various crucial estimates. However the main novelty 
is to combine the approaches from \cite{BuGwSw2013} and \cite{Carrillo2003}
 and consequently obtain a stronger result. The proof bases on the combination of comparison principle and formulating the definition with help of the entropies of semi-Kru\v zkov type with compactenss arguments. The approach presented here gives additional advantages. If the starting point are  considerations on the problem formulated with discontinuous flux (jump continuous), we shall first fill up the jumps. In the case of  \cite{BuGwSw2013} we may only do it with intervals, however in the current setting we have more freedom. 
  We come back to this issue at the end of the introduction, after formulating the definition and recalling in more detail the framework of \cite{BuGwSw2013}.

Before we formulate the definition of entropy weak solutions let us introduce some notation.  By ${\cal D}(\Omega)$ we mean the set of smooth functions with a compact support in $\Omega$, ${\cal C}(\Omega;X)$ is the set of continuous functions from $\Omega$ to the space $X$.  For $ 1\le p\le\infty$ by $L^p(\Omega)$ we understand 
standard Lebesgue spaces and by  $L^p(\R_+;X)$ Bochner spaces. 
 
\begin{definition}\label{weak}
Let $ \Phi, f$ satisfy the assumptions (H1)--(H4). We say that  a function $u\in L^\infty(\R_+\times\R^N)\cap 
L^\infty(\R_+;L^1(\R^N))$ is an entropy weak solution of~\eqref{E1}-\eqref{E0}
if there exists a function $g\in L^\infty(\Rdp)$ such that  {$u=\eta(x,g)$ and} for all $\psi\in{\cal D}(\R\times\R^N),\,\psi\ge0$ and for all $k\in\R$
\begin{itemize}
\item[(i)]
\begin{align}\nonumber
\int_{\Rdp} \left\{\left(\eta(x,g)-\eta(x,k)\right)^+\psi_t+\chi_{\{g>k\}}(A(g)-A(k))\nabla \psi+\chi_{\{g>k\}} f\psi
\right\} \\ 
\ge-\int_\Omega(u_0-\eta(x,k))^+\psi(0,\cdot), 
\end{align}
	\item[(ii)]
\begin{align} \nonumber
\int_{\Rdp} \left\{(\eta(x,k)-\eta(x,g))^+\psi_t+\chi_{\{k>g\}}(A(k)-A(g))\nabla \psi-\chi_{\{k>g\}} f\psi
\right\} \\
\ge-\int_{\R^N}(\eta(x,k)-u_0)^+\psi(0,\cdot). 
\end{align}

\end{itemize}

\end{definition}
 {
\begin{Rem}
Note that $(i)$ and $(ii.)$ of Definition \ref{weak} are equivalent to the conditions
\begin{equation}\label{eqcondI}
\int_{\Rdp} |\eta(x,g)-\eta(x,k)|\psi_t+\operatorname{sgn}(g-k)(A(g)-A(k))\nabla \psi+\operatorname{sgn}(g-k)f\psi \geq 0
\end{equation}
for all $\psi\in\mathcal{D}((0,T)\times\mathbb{R}^N)$ such that $\psi\geq 0$ and
\begin{equation}\label{eqcondII}
\operatorname{ess}\lim_{t\rightarrow 0}\int_K |u(t,x)-u_0| \ dx=0
\end{equation}
for any compact $K\subset\mathbb{R}^N$.
\end{Rem}}
Now we are ready to formulate the main result of the paper on the existence of entropy weak solutions.

\begin{theorem}\label{main}
Let $ \Phi, f$ satisfy the assumptions (H1)--(H4). Assume $u_0\in L^1(\R^N)\cap L^{\infty}(\R^N)$. Then
  there exists an entropy weak  solution $u$ to
\eqref{E1}--\eqref{E0} in the sense of Definition~\ref{weak} 
\end{theorem}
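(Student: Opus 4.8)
The plan is to construct the solution by a vanishing-viscosity and regularization procedure, pass to an entropy measure-valued (Young measure) limit, and then collapse this measure to a Dirac mass using a comparison principle built on the semi-Kru\v zkov entropies appearing in Definition~\ref{weak}. The natural first move is to exploit the compositional structure (H1): writing $g$ for the new unknown with $u=\eta(x,g)$, the flux $A(g)$ becomes continuous in $g$, so the degeneracy and the $x$-dependence are entirely transferred into the nonlinearity $\eta(x,\cdot)$ multiplying the time derivative. I would therefore regularize as follows. Replace the maximal monotone graph $\theta(x,\cdot)$ by a strictly increasing smooth approximation (e.g.\ its Yosida approximation composed with a mollification), so that its inverse $\eta_\varepsilon(x,\cdot)$ is single-valued and smooth; mollify $A$ to $A_\varepsilon$; and regularize $f$ into a smooth function $f_\varepsilon$ that remains dissipative in the sense of \eqref{A4} and still satisfies $f_\varepsilon(t,x,0)=0$. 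Adding an artificial viscosity $\varepsilon\Delta$, standard parabolic theory yields smooth solutions $u_\varepsilon=\eta_\varepsilon(x,g_\varepsilon)$ of the nondegenerate approximate problems with regularized initial data $u_0^\varepsilon\to u_0$.

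The second step is the derivation of $\varepsilon$-uniform a priori bounds. The dissipativity (H4) is what makes the $L^\infty$ estimate survive the source term: since $f(t,x,0)=0$, \eqref{A4} forces $f(t,x,u)\,u\le 0$, so in the $g$-variable any sufficiently large constant is a supersolution and its negative a subsolution; the comparison principle for the parabolic approximations then bounds $\|g_\varepsilon\|_{L^\infty}$, hence $\|u_\varepsilon\|_{L^\infty}$, in terms of the data. The $L^1$-in-space bound (uniform in $t$) likewise follows from the dissipativity together with $f(t,x,0)=0$ and the integrability assumption \eqref{betaas} on the minimal selection $\theta^*$; the growth conditions (H2)--(H3) provide the control of $A(g_\varepsilon)$ needed to keep the flux terms equi-integrable and to prevent concentration at infinity (this is exactly where the restriction $1\le p\le N/(N-1)$ enters). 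These bounds let me write the Kru\v zkov and semi-Kru\v zkov entropy inequalities for the approximations while keeping their right-hand sides (the viscous defect) controlled.

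The third and central step is the measure-valued limit and its reduction. Because we are in several space dimensions with a merely continuous (after the change of variables) flux, no BV or strong-compactness estimate is available, so $g_\varepsilon$ converges only weakly and I pass instead to the Young measure $\nu_{(t,x)}$ it generates. Passing to the limit in the approximate entropy inequalities produces an \emph{entropy measure-valued solution}: the inequalities of Definition~\ref{weak} hold with the pointwise values of $\eta(x,g)$, $A(g)$, $f$ replaced by their averages against $\nu_{(t,x)}$. The hard part is the reduction theorem---proving $\nu_{(t,x)}=\delta_{g(t,x)}$ for some measurable $g$. Following the DiPerna--Szepessy strategy as adapted in \cite{BuGwSw2013}, but now with the source term and with the semi-Kru\v zkov entropies $(\eta(x,g)-\eta(x,k))^+$ of \cite{Carrillo2003}, I would double the variables, compare the measure-valued solution against itself (two independent copies of the Young measure), and use Kru\v zkov's diagonalization. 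The source term is absorbed precisely by (H4): the cross term it produces has the sign $(f(u)-f(v))(u-v)\le 0$, so it only helps the contraction. The $x$-dependence of $\eta$ is manageable because $A$ is $x$-independent, so the flux difference $\sgn(g-k)(A(g)-A(k))$ carries no spurious $x$-derivative, and the integrability \eqref{betaas} controls the behaviour in $x$.

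Finally, once the Young measure collapses, $g$ is a genuine function and $u:=\eta(x,g)\in L^\infty(\Rdp)\cap L^\infty(\R_+;L^1(\R^N))$ by the uniform bounds. The limiting inequalities become exactly (i) and (ii) of Definition~\ref{weak}, and the attainment of the initial datum in the essential-limit sense \eqref{eqcondII} follows from the uniform-in-time $L^1$ continuity obtained in the a priori estimates. I expect the reduction of the measure-valued solution to a Dirac mass to be the main obstacle, both because the comparison principle must be established for the semi-Kru\v zkov entropies rather than the full Kru\v zkov family, and because the source term forces every estimate in the doubling-of-variables argument to be re-examined; the dissipativity hypothesis (H4) is the single structural fact that keeps this argument closing.
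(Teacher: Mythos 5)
Your overall architecture (regularize $\theta$ by Yosida approximation plus mollification, pass to a Young-measure limit, reduce the measure by a doubling-of-variables contraction) matches the paper's skeleton, but there is a genuine gap at the step you yourself identify as the main obstacle: the collapse of $\nu_{(t,x)}$ to a Dirac mass. You propose to close the contraction using only the dissipativity (H4), asserting that the cross term $\sgn(\lambda-\mu)\bigl(f(t,x,\lambda)-f(t,x,\mu)\bigr)\le 0$ ``only helps.'' Having the right sign is not enough. The contraction inequality yields, after sending the cut-offs to $1$,
\begin{equation*}
0\le\int\bigl\langle |\eta(x,\lambda)-\eta(x,\mu)|,\,\nu_{(t,x)}(\lambda)\otimes\sigma_{(t,x)}(\mu)\bigr\rangle\,dx\,dt
\le C\int\bigl\langle \sgn(\lambda-\mu)\bigl(f(\lambda)-f(\mu)\bigr),\,\nu\otimes\sigma\bigr\rangle\,dx\,dt\le 0,
\end{equation*}
so both sides vanish. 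But (H4) admits $f\equiv 0$, in which case the right-hand side carries no information, and the vanishing of the left-hand side only gives $\eta(x,\lambda)=\eta(x,\mu)$ for $\nu\otimes\sigma$-a.e.\ $(\lambda,\mu)$. Since $\eta(x,\cdot)$ is the inverse of a \emph{multivalued} maximal monotone graph, it is constant on the jump intervals of $\theta(x,\cdot)$, so this does \emph{not} force $\lambda=\mu$: the Young measure in the $g$-variable may remain spread over a level set of $\eta$. Definition~\ref{weak}, however, requires an actual function $g$ entering through $\chi_{\{g>k\}}(A(g)-A(k))$ and $\chi_{\{g>k\}}f$, which are not functions of $\eta(x,g)$ alone; collapsing only the $u$-marginal is precisely the deficiency of \cite{BuGwSw2013} (uniqueness up to level sets, restricted entropies) that the theorem is meant to overcome.

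The paper's proof supplies exactly the missing device, and it is absent from your proposal: a \emph{strictly} dissipative perturbation $\varphi_{\ell,m}(r)=\tfrac1\ell\arctan(r^-)-\tfrac1m\arctan(r^+)$ added to the source in the $v=\theta(x,u)$ variable. With $f+\varphi_{\ell,m}$ strictly dissipative, the vanishing of the cross term does force $\lambda=\mu$ $\nu\otimes\sigma$-a.e., so $\nu=\sigma=\delta_{v_{\ell,m}}$ and one obtains a genuine entropy weak solution for each $(\ell,m)$. The perturbation is then removed by a second argument you would also need: the comparison principle \eqref{begin2} built on the semi-Kru\v zkov entropies shows $v_{\ell,m}\le v_{\ell,m'}$ for $m'>m$ and $v_{\ell',m}\le v_{\ell,m}$ for $\ell'>\ell$, so the limits $m\to\infty$, $\ell\to\infty$ are \emph{monotone} and yield a.e.\ convergence without any compactness, which is what allows the nonlinear terms $\chi_{\{v>k\}}(A(v)-A(k))$ to pass to the limit. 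Without the strict perturbation and this two-parameter monotone scheme, your argument stalls at a measure-valued object whose $g$-marginal cannot be identified. (Minor differences elsewhere --- your artificial viscosity versus the paper's direct appeal to Kru\v zkov's theory for the smoothed first-order problem --- are immaterial.)
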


To understand the advantage of the framework presented here let us closely observe the approach in \cite{BuGwSw2013} and recall that by an {\it admissible parametrization}  of $A$ the authors understand
 a couple $({\cal A},U)$ if  the function $U\in \mathcal{C}(\R)$ is nondecreasing and $\lim_{s\to \pm \infty}U(s)=\pm \infty$.  {Moreover, defining}
\begin{equation}\label{alphabeta}
\alpha_k:= \inf_{\alpha; \; U(\alpha)=z_k}\alpha,
\qquad \beta_k:= \sup_{\beta; \; U(\beta)=z_k}\beta
\end{equation}
%
it is required that the function $U$ is constant on $[\alpha_k,\beta_k]$ and strictly increasing
 on $(\beta_k,\alpha_{k+1})$ for all $k\in\mathbb{N}$. 
The function ${\cal A}\in  {\mathcal{C}(\R;\mathbb{R}^N)}$ satisfies ${\cal A}(s)\in A(U(s))$
and  is linear on $[\alpha_k,\beta_k]$ for all $k\in \mathbb{N}$.
Then 
$ {u\in L^{\infty}(0,\infty;L^1(\R^N))\cap L^{\infty}(\R_+\times\mathbb{R}^N)}$ is an entropy
weak solution to \eqref{E1} related to $({ A},\theta)$ and $u_0$ for an admissible parametrization $({\cal A},U)$ of $A$ if there exists a function $g\in L^{\infty}(\Rdp)$ such that
\begin{equation}
\eta(x,U(g(t,x)))=u(t,x),\quad {\cal A}(g(t,x))\in \bG(\theta(x,u(t,x))) \qquad \textrm{ a.e. in } \Rdp,\label{u1}
\end{equation}
\begin{align}
& {\operatorname{ess}\lim}_{t\to 0} \int_K |u(t,x)-u_0(x)|\; dx =0, &&\textrm{for any compact } K\subset \R^N, \label{u4}
\end{align}
and for all nonnegative $\psi\in \mathcal{D}(\Rdp)$ and arbitrary
$k\in \mathbb{R}\setminus\bigcup_{l\in{\mathbb N}}\,(\alpha_l,\beta_l)$
 there holds
\begin{equation}
\begin{split}
&\int_{\Rdp}|\eta(x,U(g(t,x)))-\eta(x,U(k))| \psi_{t}(t,x)\; dx \; dt\\
&\quad  +\int_{\R_+\times\R^N} (\sgn (g(t,x)-k) ({\cal A}(g(t,x))-{\cal A}(k))) \cdot \nabla \psi(t,x) \; dx\; dt \ge 0.\label{u3_en}
\end{split}
\end{equation}
The numbers $\alpha_l,\beta_l$, $l\in{\mathbb N}$ are defined in \eqref{alphabeta}.

\begin{Rem}[Remark 1.1 from \cite{BuGwSw2013}]
Any entropy weak  solution  is a weak solution to \eqref{E1}-\eqref{E0}.
Indeed, since $g\in  {L^{\infty}(\mathbb{R}_+\times\mathbb{R}^N)}$ we may take $k:=\pm \|g\|_{\infty}$ in \eqref{u3_en} (or possibly we increase/decrease the value of $k$ such that $U$ is strictly increasing in $k$) and by using the strict monotonicity of $\eta$  and the monotonicity of $U$  we conclude that
\begin{align}
&u_{t} + \div A(g) =0, &&\textrm{in the sense of distribution in } \Rdp,\label{u2}
\end{align}
which is exactly \eqref{E1}  {with $f=0$}.
Next, we can use the fact that by functions $|u-\cdot|$ one can generate any convex function and therefore it is a direct consequence of \eqref{u3_en} that (see~\cite{BuGwMaSw2011} for details)  for all smooth convex ${E}$, such that ${E}$ is linear on $(\alpha_k,\beta_k)$ for all $\mathbb{N}$, where $\alpha_k$ and $\beta_k$ are introduced in  \eqref{alphabeta}, there holds
\begin{align}
&Q_u(x,g)_{t} + \div \bQ_{A} \le 0, &&\textrm{in sense of
distribution in }\R_+\times\R^N\label{u3}
\end{align}
with $Q_u$ and $\bQ_{A}$ given by
\begin{equation}
\partial_s Q_u(x,s)=\partial_s \eta(x,U(s)){E}'(s), \qquad  \bQ_{A}(s)= \int_0^s{A'(\tau)} {E}'(\tau)\;d\tau.\footnote{Since $A$ is only continuous, then this relation should be understood as follows $\bQ_{A}(s)= A(s)E'(s)-\int_0^s{A(\tau)} {E}''(\tau)\;d\tau.$} 
\label{opet}
\end{equation}
\end{Rem}

Hence from here one easily observes that \eqref{u3_en} does not hold for all $k\in\R$ and the family of entropies is restricted to such that are linear on the intervals $(\alpha_k, \beta_k)$. 
In a consequence we lose the information on the intervals where $\theta$ is multi-valued. In the current paper the situation is significantly different. The approximation of the problem follows in two steps. One is the mollification of the multi-valued term (we take a minimal selection and then mollify with a smooth kernel) and the second one consists in subtracting a strictly monotone perturbation from the source term. Then the right hand side becomes strictly dissipative, namely the inequality in~\eqref{A4} becomes strict for $u\neq v$ and this is the sufficient argument to obtain the uniqueness of entropy measure-valued solutions and to show they reduce to a Dirac measure.  {Here one needs the initial condition.} For passing to the limit with a perturbation of the right-hand side one takes advantage of   the semi-Kru\v zkov entropies $E(u,k)=(u-k)^+$ and $E(u,k)=(u-k)^-$ and then combines the information on the monotonicity of appropriate sequences and boundedness to obtain the strong convergence. Hence this is sufficiently powerful information to provide that on the sets where $\theta$ is multi-valued  
one is not obliged to have linear (or affine) functionals and continuity is enough for the limit passage. 


We complete this section by referring to other previous results for scalar conservation laws with discontinuous fluxes.  The approach of Panov \cite{Panov} arises from an idea of {\it adapted entropies} introduced for the problems with $x-$discontinuous fluxes in \cite{BaJe1997} and later in \cite{AudussePerthame}. The approach consisted in using in classical Kru\v zkov entropies in place of a constant $k$ the solution to a stationary problem. 
The equivalence between such solutions and entropy weak solutions understood as in \cite{Kr70} in case of smooth fluxes was shown in~\cite{Chen}. There are various different approaches to  fluxes discontinuous in $x$, see e.g. the front tracking method for one dimensional problem, cf.~\cite{Gi1993, KaRiTo2003, Risebro}. The multi-dimensional problem was considered among others in~\cite{AnKaRi2010, Ji2011, Mi2010}. To motivate the studies in the direction of fluxes discontinuous in $u$ we refer to the implicit constitutive theory and the works of Rajagopal, \cite{Ra03}, described also in more detail in \cite{BuGwMaSw2011}.


The paper is organized as follows. In Section~\ref{entropy} we collect all the essential tools needed for the proof of Theorem~\ref{main}.  We start with a contraction principle formulated for entropy measure-valued solutions (Lemma~\ref{contraction}). Then essentially using this result we show a contraction principle for entropy weak solutions  {(Lemma~\ref{22}, estimate~\eqref{begin1})} and comparison principle for entropy 
weak solutions  {(Lemma~\ref{22}, estimate~\eqref{begin2})}. The whole Section~\ref{existence} is dedicated to the proof of Theorem~\ref{main}. We start with regularizing the flux function and then add the strictly monotone perturbation to the source term. The scheme of the proof is first showing the existence of entropy measure-valued solutions, then their uniqueness and finally concluding that the solutions are indeed entropy weak solutions. 
In the final part of the paper there is an appendix which partially recalls the facts from~\cite{BuGwSw2013} and also extends some technical lemmas for the case of multi-valued mappings.  

\section{Entropy inequalities}\label{entropy}
We shall start this section with the definition of entropy measure-valued solutions and then collect the essential estimates used for the proof of existence of solutions: averaged contraction principle and comparison principle. 
\subsection{Averaged contraction principle for entropy measure valued solutions}
We recall that  $\M(\R)$ denotes the space of bounded Radon measures and   $\prob(\R)$  the space of probablity measures, $\mathcal{C}_b(\mathbb{R})$ stands for the space of continuous bounded functions.  {As usual, $\langle\cdot,\cdot\rangle$ denotes the duality pairing between $\mathcal{C}_b(\mathbb{R})$ and $\mathcal{M}(\mathbb{R})$.}
 By a Young measure $\nu$ we mean a weak$^*$ measurable map $\nu:\Rdp\to{\mathcal M}(\R)$ and
  such that $\nu_{(t,x)}\ge0, \|\nu_{(t,x)}\|_{{\mathcal M}(\R)}\le 1$ for a.a. $(t,x)\in\Rdp$. Any bounded sequence  {of measurable functions} $u^n:\Rdp\to\R$ generates a Young measure, which is a probability measure.
  By $L^\infty_w(\Rdp;{\mathcal M}(\R))$ we understand the space of weak$^*$ measurable maps
  $\nu:\Rdp\to{\mathcal M}(\R)$ that are essentially bounded.


\begin{definition}\label{DF2}
Let $ \Phi, f$ satisfy the assumptions (H1)--(H4) and $u_0\in L^1_{loc}(\R^N)$. We say that a Young measure $\nu:\Rdp \to
\prob(\R)$ is an  entropy measure-valued solution to
\eqref{E1} if 
there exists $R(t,x)\in L^{\infty}_{loc}(\R_+\times \R^N)$ such that 
\begin{equation}
\begin{split}
\supp \nu_{(t,x)} \subset [-R{(t,x)},R{(t,x)}] \qquad \textrm{ for a.a. } (t,x)\in \Rdp
 \end{split}\label{ApE}
\end{equation}
and if 
for all $\mu \in \R$ and all nonnegative $\psi \in \mathcal{D}(\Rdp)$ there holds

\begin{equation}
\begin{split}
&\int_{\Rdp}\langle (\eta(x,\lambda)-\eta(x,\mu))^+,\nu_{(t,x)}(\lambda)
\rangle \psi_{t}(t,x)\; dx \; dt \\
&\quad + \int_{\Rdp}\langle \chi_{\{\lambda>\mu\}}(A(\lambda)-A(\mu))
 ,\nu_{(t,x)}(\lambda)\rangle \cdot \nabla \psi (t,x) \; dx\; dt \\
&\quad + \int_{\Rdp}
\langle \chi_{\{\lambda>\mu\}} f(t,x,\lambda),\nu_{(t,x)}(\lambda)\rangle\psi \; dx\; dt\ge 0 . \label{EQ+}
\end{split}
\end{equation}
and 
\begin{equation}
\begin{split}
& {-\int_{\Rdp}\langle (\eta(x,\lambda)-\eta(x,\mu))^-,\nu_{(t,x)}(\lambda)
\rangle \psi_{t}(t,x)\; dx \; dt} \\
&\quad + \int_{\Rdp}\langle \chi_{\{\lambda<\mu\}}(A(\lambda)-A(\mu))
 ,\nu_{(t,x)}(\lambda)\rangle \cdot \nabla \psi (t,x) \; dx\; dt \\
&\quad + \int_{\Rdp}
\langle \chi_{\{\lambda<\mu\}} f(t,x,\lambda),\nu_{(t,x)}(\lambda)\rangle \psi\; dx\; dt\le 0 . \label{EQ-}
\end{split}
\end{equation}
Moreover, for all compact $K\subset \R^N$ the following holds
    \begin{equation}\label{ic}
 {\operatorname{ess}\lim}_{t\to 0_+} \int_K \langle |\eta(x,\lambda)-u_0(x)|,\nu_{(t,x)}(\lambda)\rangle \; dx =0.
\end{equation}

\end{definition}
The existence of entropy measure-valued solutions will be a byproduct of the proof of existence of entropy weak solutions. Below we formulate and prove the estimate (the averaged contraction  principle) which is used both for showing existence and uniqueness of entropy measure-valued solutions.  The proof bases on the method  of doubling the variables, but on the level of measure-valued solutions.
\begin{lemma}\label{contraction}
Assume that $\nu$,  $\sigma$  are two local entropy measure-valued solutions to \eqref{E1}
with a right-hand side $f$ and initial condition $u_0\in L^1_{loc}(\R^N)$. Let\footnote{We keep the general notation $(E,Q)$ instead of writing the concrete form of the entropy and the entropy flux for the sake of the next lemmas and their proofs, where  similar arguments are partially repeated.} $E(\xi)=|\xi|$ with a corresponding flux 
$Q(\lambda,\mu)=\sgn(\lambda-\mu)(A(\lambda)-A(\mu))$. Moreover let
$$E'(\xi):=(\partial E)^0(\xi)=\left\{\begin{array}{rcl}
-1&{\rm for}&\xi<0\\
0&{\rm for}&\xi=0\\
1&{\rm for}&\xi>0 
\end{array}.\right.
$$
 Then for all nonnegative $\psi \in \mathcal{D}(\Rdp)$ it holds  
\begin{equation}
\begin{split}
&\int_{\Rdp}\langle E(\eta(x,\lambda)-\eta(x,\mu)), \nu_{(t,x)}(\lambda)\otimes \sigma_{(t,x)}(\mu)\rangle \psi_{t}(t,x)\; dx\; dt\\
&\quad + \int_{\Rdp}\langle Q(\lambda, \mu), \nu_{(t,x)}(\lambda)\otimes \sigma_{(t,x)}(\mu)\rangle \cdot \nabla \psi(t,x)\; dx \; dt \\
&\quad +\int_{\Rdp}\langle E'(\lambda-\mu)(f(t,x,\lambda)-f(t,x,\mu)), \nu_{(t,x)}(\lambda)\otimes \sigma_{(t,x)}(\mu)\rangle\psi(t,x)\;dx\;dt
\ge 0 \label{begin3}
\end{split}
\end{equation}

\end{lemma}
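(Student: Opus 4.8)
The plan is to run Kru\v zkov's doubling-of-variables argument directly at the level of the Young measures $\nu$ and $\sigma$. As a preliminary step I would merge the two semi-Kru\v zkov inequalities \eqref{EQ+}--\eqref{EQ-} into a single full Kru\v zkov inequality. Since \eqref{EQ+} is nonnegative and \eqref{EQ-} is nonpositive, their difference is admissible, and using $\xi^{+}+\xi^{-}=|\xi|$ together with $\chi_{\{\lambda>\mu\}}-\chi_{\{\lambda<\mu\}}=\sgn(\lambda-\mu)$ one obtains, for every constant $\mu\in\R$ and every nonnegative $\psi$,
\begin{equation*}
\int_{\Rdp}\Big(\langle|\eta(x,\lambda)-\eta(x,\mu)|,\nu_{(t,x)}\rangle\,\psi_t+\langle Q(\lambda,\mu),\nu_{(t,x)}\rangle\cdot\nabla\psi+\langle\sgn(\lambda-\mu)\,f(t,x,\lambda),\nu_{(t,x)}\rangle\,\psi\Big)\ge 0,
\end{equation*}
together with the analogous inequality for $\sigma$. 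I record that the flux is symmetric, $Q(\mu,\lambda)=Q(\lambda,\mu)$, which is exactly what will make the two flux contributions add up below.

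Next I would double the variables, writing $(t,x)$ for $\nu$ and a second copy $(s,y)$ for $\sigma$. In the full Kru\v zkov inequality for $\nu$ I let the constant $\mu$ range over $\sigma_{(s,y)}$ (i.e.\ integrate against $d\sigma_{(s,y)}(\mu)\,dy\,ds$), and symmetrically I take the constant in $\sigma$'s inequality equal to $\lambda$ and integrate against $d\nu_{(t,x)}(\lambda)\,dx\,dt$. As common test function I would use the collapsing kernel $\Psi_\epsilon(t,x,s,y)=\phi(\tfrac{t+s}{2},\tfrac{x+y}{2})\,\omega_\epsilon(\tfrac{t-s}{2},\tfrac{x-y}{2})$ with $0\le\phi\in\mathcal{D}(\Rdp)$ and $\omega_\epsilon$ a standard space--time mollifier; because $\phi$ has compact support in $\Rdp$ it already vanishes near $\{t=0\}$, so no initial term is produced (and, the two solutions sharing the datum $u_0$, any such term would cancel anyway). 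Adding the two tested inequalities, the mechanism is the familiar one: $\partial_t\Psi_\epsilon+\partial_s\Psi_\epsilon\to\phi_t$ and $\nabla_x\Psi_\epsilon+\nabla_y\Psi_\epsilon\to\nabla\phi$ as $\epsilon\to0$ (the $\omega_\epsilon$-derivatives cancel), the flux terms combine through $Q(\mu,\lambda)=Q(\lambda,\mu)$, and the two source terms combine through $\sgn(\mu-\lambda)=-\sgn(\lambda-\mu)$.

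The final step is the limit $\epsilon\to0$, which collapses $(s,y)$ onto $(t,x)$. The support bound \eqref{ApE} confines $\lambda,\mu$ to a fixed compact set, so that $A$ and $f(t,x,\cdot)$ are uniformly continuous there and all integrands stay dominated. In the entropy contribution the factor $|\eta(y,\lambda)-\eta(y,\mu)|$ carried by $\sigma$ must be identified with $|\eta(x,\lambda)-\eta(x,\mu)|$; this is where I invoke the continuity of $\eta$ from (H1) (equivalently, $L^1_{loc}$-continuity of the translates of $\eta(\cdot,v)$, uniformly for $v$ in a compact set). The flux term passes to the limit by continuity of $A$, producing $\langle Q(\lambda,\mu),\nu\otimes\sigma\rangle\cdot\nabla\phi$, and the combined source term tends to $\langle \sgn(\lambda-\mu)(f(t,x,\lambda)-f(t,x,\mu)),\nu\otimes\sigma\rangle\phi$, i.e.\ exactly the term in \eqref{begin3} since $E'(\lambda-\mu)=\sgn(\lambda-\mu)$. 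Assembling the three limits yields \eqref{begin3}.

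I expect the genuine obstacle to be the source term, which is the new ingredient here. Since $f$ is only $L^1_{loc}$ in $(t,x)$ while merely continuous in $u$, joint continuity is unavailable, and the convergence $\int\langle|f(s,y,\mu)-f(t,x,\mu)|,\sigma_{(s,y)}\rangle\,\omega_\epsilon\to0$ has to be obtained by coupling continuity of translations in $L^1_{loc}$ (in the $(t,x)$ variables) with uniform continuity in $u$ on the common support of the measures, all while the doubled Young measure $\nu_{(t,x)}\otimes\sigma_{(s,y)}$ is being forced onto the diagonal. Making this simultaneous limit rigorous is the delicate point introduced by the presence of the source; by contrast the entropy and flux limits are essentially the standard ones already present in the source-free theory of \cite{BuGwSw2013}.
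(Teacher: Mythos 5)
Your proposal runs the classical Kru\v zkov doubling of the physical variables, whereas the paper never doubles $(t,x)$: following DiPerna and Szepessy it mollifies the Young measures themselves, derives pointwise differential inequalities for $\nu^{\delta,\varepsilon}$ and $\sigma^{\delta,\varepsilon}$, pairs them against each other \emph{at the same point} $(t,x)$, and only then recombines the time derivatives via the product rule \eqref{chain} before integrating by parts against $\psi$. This is not a stylistic difference, and it is exactly where your argument has a gap. In your doubled inequalities the entropy carried by $\nu$ is $|\eta(x,\lambda)-\eta(x,\mu)|$ while the one carried by $\sigma$ is $|\eta(y,\lambda)-\eta(y,\mu)|$. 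Writing $G(x):=|\eta(x,\lambda)-\eta(x,\mu)|$ and $\Psi_\epsilon=\phi(\tfrac{t+s}{2},\tfrac{x+y}{2})\,\omega_\epsilon(\tfrac{t-s}{2},\tfrac{x-y}{2})$, the sum of the two time-derivative contributions is
\[
\tfrac12\bigl(G(x)+G(y)\bigr)\,(\partial_1\phi)\,\omega_\epsilon\;+\;\tfrac12\bigl(G(x)-G(y)\bigr)\,\phi\,\partial_1\omega_\epsilon ,
\]
so the usual cancellation $\partial_t\Psi_\epsilon+\partial_s\Psi_\epsilon\to\phi_t$ leaves a commutator term in which the time derivative of the kernel, of $L^1$-norm $O(\epsilon^{-1})$, multiplies $G(x)-G(y)$. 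Under (H1)--(H2) the map $x\mapsto\eta(x,\cdot)$ is only measurable (the paper uses precisely $\eta\in L^{\infty}(\mathbb{R}^N;\mathcal{C}(K))$; an $x$-continuous $\eta$ would defeat the purpose of an $x$-discontinuous flux), so $G(x)-G(y)$ is small only in the $L^1_{loc}$ sense with no rate, and the product $o(1)\cdot O(\epsilon^{-1})$ is not controlled with a single collapsing space--time kernel. You attribute the entropy limit to ``the standard source-free theory'' of \cite{BuGwSw2013}, but that paper avoids this term by not doubling variables at all; only the flux commutator is harmless here, because $Q(\lambda,\mu)$ is $x$-independent.

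The gap is repairable in principle --- e.g.\ by decoupling the space and time scales and collapsing space before time (the spatial commutator then vanishes by $L^1$-continuity of translations of $\eta(\cdot,v)$, uniformly for $v$ in the compact set given by \eqref{ApE}, and afterwards $G$ no longer depends on the doubled variable $t-s$) --- but carrying this out forces you to make sense of $\sigma_{(s,y)}$ evaluated along $y=x$ for a merely weak$^*$ measurable Young measure, which is essentially the $\nu^{\varepsilon},\nu^{\delta}$ machinery of the paper in disguise. Concerning the source term, you correctly sense a difficulty, but the delicate point is not only the $L^1_{loc}$ regularity in $(t,x)$: the integrand $E'(\lambda-\mu)(f(t,x,\lambda)-f(t,x,\mu))$ is discontinuous on the diagonal $\lambda=\mu$, so pairing the inequality (valid for each fixed constant) against $\nu\otimes\sigma$ and passing to the limit requires the approximation $f^n$ with the error functionals $\mathcal{R}^{\lambda}_{\varepsilon,\delta,n}$, $\mathcal{R}^{\mu}_{\varepsilon,\delta,n}$ and the observation that $E'(\lambda-\mu)(f^n(t,x,\lambda)-f^n(t,x,\mu))$ \emph{is} continuous; your sketch does not supply a substitute for this step.
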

\begin{proof}
Let $\omega\in \mathcal{D}(-1,1)$ be a regularizing kernel, i.e.,
$\omega(x)=\omega(-x)$ and $\int_{-1}^1 \omega(x)\; dx =1$. Then, for any $\gamma>0$, we define
\begin{align*}
\omega_1^{\gamma}(t)&:=\gamma^{-1} \omega (t/\gamma) &&\textrm{for all } t\in \R,\\
\omega_2^{\gamma}(x)&:= \gamma^{-N} \omega(x_1/\gamma)\cdot \ldots\cdot \omega(x_N/\gamma) &&\textrm{for all } x=(x_1,\ldots, x_N)\in \mathbb{R}^N.
\end{align*}
For arbitrary  $\varepsilon,\delta>0$ we set $\omega^{\delta,\varepsilon}(t,x):=\omega_1^{\delta}(t)\cdot \omega_2^{\varepsilon}(x)$. Notice that for any Young measure
$\nu \in L^{\infty}_{w}([0,T]\times \R^N; \mathcal{M}(\R))$ there exists a Young measure
$\nu^\delta\in  L^{\infty}_{w}(\R^N;\mathcal{C}^{\infty}([0,T]; \mathcal{M} (\R))) $ with
$\|\nu^\delta\|_{L^{\infty}_{w}([0,T]\times \R^N; \mathcal{M}(\R))}\le 1$
such that for any  $f\in  {\mathcal{C}_b(\R)}$ the following holds\footnote{We extend the measure for $t<0$ and $t>T$ by zero.} $(\omega_1^\delta*\langle  f, \nu \rangle)=\langle f, \nu^{\delta}
\rangle$ for almost all $t\in \R$. Moreover, we can interchange the derivative as
$\langle f, \partial_t\nu^\delta \rangle=\langle f, \nu^\delta \rangle_{t}$ for all $t\in \R$. Similarly, there exists $\nu^\varepsilon \in L^\infty_w([0,T];\mathcal{C}^{\infty}(\R^N_{loc}; \mathcal{M} (\R)))$ with
$\|\nu^\varepsilon\|_{L^{\infty}_{w}([0,T]\times \R^N; \mathcal{M}(\R))}\le 1$ such that $\omega_2^\varepsilon *\langle f, \nu \rangle
=\langle f, \nu^\varepsilon\rangle$ and $\langle  f, \partial_{x_i} \nu^\varepsilon\rangle=\partial_{x_i}\langle  f, \nu^\varepsilon \rangle$ for all $x\in \R^N$, see  Ref.~\cite{DiPerna}.

Let $Q(\lambda,\mu):= \sgn(\lambda-\mu)(A(\lambda)-A(\mu))$. Then
$$(\nu,\sigma)\mapsto \langle Q(\lambda, \mu),\nu\otimes \sigma\rangle \in\R$$
is a  bounded bilinear form from $\M(\R)\times \M(\R)$ to $\R$ and 
$$(t,x)\mapsto \nu_{(t,x)}^{\varepsilon,\delta}\in {\cal C}^\infty (K,(\M(\R), \|\cdot\|_{\M}))$$
$$(t,x)\mapsto \sigma_{(t,x)}^{\varepsilon,\delta}\in {\cal C}^\infty (K,(\M(\R), \|\cdot\|_{\M}))$$
for any compact $K\subset \R_+\times \R^N$  and then 
\begin{equation}\label{div}
\div\langle Q(\lambda, \mu), \nu_{(t,x)}^\varepsilon\otimes\sigma_{(t,x)}^\varepsilon\rangle
=\langle Q(\lambda, \mu), \nabla \nu_{(t,x)}^\varepsilon\otimes\sigma_{(t,x)}^\varepsilon\rangle
+\langle Q(\lambda, \mu), \nu_{(t,x)}^\varepsilon\otimes\nabla \sigma_{(t,x)}^\varepsilon\rangle.
\end{equation}
For arbitrary nonnegative $\psi\in \mathcal{D}(\Rdp)$ we observe that
for all $\mu \in \R$
\begin{equation}
\begin{split}
&\int_{\Rdp} \langle E(\eta(x,\lambda)-\eta(x,\mu)),\nu_{(t,x)}(\lambda)
\rangle\  (\psi * (\omega_1^{\delta}\cdot \omega_2^{\varepsilon}))_{t} \; dx\; dt\\
&=\int_{\Rdp}\omega_2^{\varepsilon}*\langle E(\eta(x,\lambda)-\eta(x,\mu)),\nu_{(t,x)}^{\delta}(\lambda)
\rangle\,  \psi_{t}  \; dx\; dt.
\end{split}\label{3.7}
\end{equation}
Similarly, we obtain for all $\mu \in \R$
\begin{equation}
\begin{split}
&\int_{\Rdp}\langle Q(\lambda,\mu),\nu_{(t,x)}(\lambda)\rangle \cdot \nabla (\psi * (\omega_1^{\delta}\cdot \omega_2^{\varepsilon}))\; dx\; dt=\\
&=\int_{\Rdp} \langle Q(\lambda,\mu),\nu_{(t,x)}^{\delta,\varepsilon}(\lambda)\rangle \cdot \nabla \psi \; dx\; dt.
\end{split}
\end{equation}
Moreover
\begin{equation}\label{2.11}\begin{split}
&\int_{\Rdp}\langle E'(\lambda-\mu)f(t,x,\lambda), \nu_{(t,x)}(\lambda)\rangle \psi * (\omega_1^{\delta}\cdot \omega_2^{\varepsilon}) \;dx\;dt\\
&=\int_{\Rdp}(\omega_1^{\delta}\cdot \omega_2^{\varepsilon})*\langle E'(\lambda-\mu)f(t,x,\lambda), \nu_{(t,x)}(\lambda)\rangle \psi \;dx\;dt.
\end{split}\end{equation}
Summing \eqref{EQ+} and \eqref{EQ-} we obtain an entropy inequality with the entropy $E(\xi)=|\xi|$, where we may take $\psi * (\omega_1^{\delta}\cdot \omega_2^{\varepsilon})$ as a test function  
and  using \eqref{3.7}--\eqref{2.11}  we deduce that
for all $\mu \in \R$ and all nonnegative $\psi \in \mathcal{D}((\varepsilon,\infty)\times \R^N)$ there holds
\begin{equation}
\begin{split}
&\int_{\Rdp}\omega_2^{\varepsilon}*\langle E(\eta(x,\lambda)-\eta(x,\mu)),\nu_{(t,x)}^{\delta}(\lambda)
\rangle \psi_{t}\; dx \; dt \\
&\quad +\int_{\Rdp} \langle Q(\lambda,\mu),\nu_{(t,x)}^{\delta,\varepsilon}(\lambda)\rangle \cdot \nabla \psi \; dx\; dt
\\
&\quad+\int_{\Rdp}(\omega_1^{\delta}\cdot \omega_2^{\varepsilon})*\langle E'(\lambda-\mu)
f(t,x,\lambda), \nu_{(t,x)}(\lambda)\rangle \psi \;dx\;dt \ge0\label{regularized}
\end{split}
\end{equation}
which in particular implies that for all $\tilde{\mu}\in \R$ and all $(t,x)\in (\varepsilon,\infty)\times \R^N$ there holds
\begin{equation}
\begin{split}
\left(\omega_2^{\varepsilon}*\langle E(\eta(x,\lambda)-\eta(x,\tilde \mu)),\nu_{(t,x)}^\delta(\lambda)
\rangle \right)_{t} 
 + \div  \langle Q(\lambda,\tilde \mu),\nu_{(t,x)}^{\delta,\varepsilon}(\lambda)\rangle 
\\
 \le (\omega_1^{\delta}\cdot \omega_2^{\varepsilon})*\langle E'(\lambda-\tilde \mu)
f(t,x,\lambda), \nu_{(t,x)}(\lambda)\rangle. \label{reg2}
\end{split}
\end{equation}
Similarly we have
 for any 
$\varepsilon>0$,   $\tilde \lambda \in \R$ and all  $(t,x)\in (\varepsilon,\infty)\times \R^N$ 
\begin{equation}
\begin{split}
\left(\omega_2^{\varepsilon}*\langle E(\eta(x,\tilde \lambda)-\eta(x, \mu)),\sigma_{(t,x)}^\delta(\mu)
\rangle \right)_{t} 
 + \div  \langle Q(\tilde \lambda, \mu),\sigma_{(t,x)}^{\delta,\varepsilon}(\mu)\rangle 
 \\
 \le (\omega_1^{\delta}\cdot \omega_2^{\varepsilon})*\langle E'( \mu-\tilde \lambda)
f(t,x,\mu), \sigma_{(t,x)}(\mu)\rangle. \label{reg3}
\end{split}
\end{equation}

We apply $\sigma_{(t,x)}^{\delta,\varepsilon}$ to \eqref{reg2}. Note that the left-hand side  is a continuous function of $\mu$ and the right-hand side is only a Borel function of $\mu$.  Similarly we apply $\nu_{(t,x)}^{\delta,\varepsilon}$ onto \eqref{reg3}. Summing the resulting expressions 
we find that for all $(t,x)\in (2\varepsilon,\infty)\times \R^N$ there holds
\begin{equation}
\begin{split}
\langle \omega_2^{\varepsilon}*&\langle E(\eta(x,\lambda)-\eta(x,\mu)), \nu_{(t,x)}^\delta(\lambda)\rangle_{t}, \sigma_{(t,x)}^{\delta,\varepsilon}(\mu)\rangle\\
&+\langle \omega_2^{\varepsilon}*\langle E(\eta(x,\lambda)-\eta(x,\mu)), \sigma_{(t,x)}^\delta(\mu)\rangle_{t}, \nu_{(t,x)}^{\delta,\varepsilon}(\lambda)\rangle\\
&+\div \langle  Q(\lambda,\mu), \nu_{(t,x)}^{\delta,\varepsilon}(\lambda)\otimes
 \sigma_{(t,x)}^{\delta,\varepsilon}(\mu)\rangle 
 \\
&\le \langle (\omega_1^{\delta}\cdot \omega_2^{\varepsilon})*\langle E'(\lambda- \mu)
f(t,x,\lambda), \nu_{(t,x)}(\lambda)\rangle,   \sigma^{\delta,\varepsilon}_{(t,x)}(\mu)\rangle
\\
&+
\langle (\omega_1^{\delta}\cdot \omega_2^{\varepsilon})*\langle E'(\mu-\lambda)
f(t,x,\mu), \sigma_{(t,x)}(\mu)\rangle,   \nu^{\delta,\varepsilon}_{(t,x)}(\lambda)\rangle=:I
\end{split}
\label{regf}
\end{equation}
To  proceed with a righ-hand side we define the errors as follows
\begin{equation}\begin{split}
{\cal R}_{\varepsilon,\delta,n}^\lambda:=&
\langle(\omega_1^{\delta}\cdot \omega_2^{\varepsilon})*\langle E'(\lambda- \mu)
(f(t,x,\lambda)-f^n(t,x,\lambda), \nu_{(t,x)}(\lambda)\rangle,   \sigma^{\delta,\varepsilon}_{(t,x)}(\mu)\rangle\\
+&
\langle(\omega_1^{\delta}\cdot \omega_2^{\varepsilon})*\langle E'(\lambda- \mu)
f^n(t,x,\lambda), \nu_{(t,x)}(\lambda)\rangle,   \sigma^{\delta,\varepsilon}_{(t,x)}(\mu)\rangle\\
-&\langle\langle E'(\lambda- \mu)
f^n(t,x,\lambda), \nu^{\delta,\varepsilon}_{(t,x)}(\lambda)\rangle,   \sigma^{\delta,\varepsilon}_{(t,x)}(\mu)\rangle
\end{split}\end{equation}
and 
\begin{equation}\begin{split}
{\cal R}_{\varepsilon,\delta,n}^\mu:=&
\langle(\omega_1^{\delta}\cdot \omega_2^{\varepsilon})*\langle E'(\mu-\lambda)
(f(t,x,\mu)-f^n(t,x,\mu), \sigma_{(t,x)}(\mu)\rangle, \nu^{\delta,\varepsilon}_{(t,x)}(\lambda)  \rangle\\
+&
\langle(\omega_1^{\delta}\cdot \omega_2^{\varepsilon})*\langle E'(\mu-\lambda)
f^n(t,x,\mu), \sigma_{(t,x)}(\mu)\rangle,   \nu^{\delta,\varepsilon}_{(t,x)}(\lambda)\rangle\\
-&\langle\langle E'(\mu-\lambda)
f^n(t,x,\mu), \sigma^{\delta,\varepsilon}_{(t,x)}(\mu)\rangle,   \nu^{\delta,\varepsilon}_{(t,x)}(\lambda)\rangle
\end{split}\end{equation}
where $(f^n)_{n\in{\mathbb N}}$ is the sequence of uniformly continuous functions in $(t,x)$ and continuous in $u$
   and there exists  an $L_K(n)$ such that for a fixed compact $K$ it vanishes as $n\to\infty$ and 
\begin{equation}
\sup\limits_{\lambda\in K}\|f(\cdot,\cdot,\lambda)-f^n(\cdot,\cdot,\lambda)\|_{L^1(\Rdp)}\le L_K(n).
\end{equation}
Let  ${\cal W}_K^n$ be a modulus of continuity of the function $f^n$, namely  ${\cal W}_K^n:\R^2_+\to\R_+$ is  continuous,  ${\cal W}_K^n(0,0)=0$ and 
\begin{equation}
\sup\limits_{\lambda\in K}|f^n(t-s,x-y,\lambda)-f^n(t,x,\lambda)|\le {\cal W}_K^n(|s|,|y|)
\end{equation}
where $K$ is an arbitrary  compact subset of $\R$.
Hence 
\begin{equation}
\begin{split}
I&=\langle\langle E'(\lambda- \mu)
f^n(t,x,\lambda), \nu^{\delta,\varepsilon}_{(t,x)}(\lambda)\rangle,   \sigma^{\delta,\varepsilon}_{(t,x)}(\mu)\rangle +{\cal R}_{\varepsilon,\delta,n}^\lambda\\
&+\langle\langle E'(\mu-\lambda)
f^n(t,x,\mu), \sigma^{\delta,\varepsilon}_{(t,x)}(\mu)\rangle,   \nu^{\delta,\varepsilon}_{(t,x)}(\lambda)\rangle
+{\cal R}_{\varepsilon,\delta,n}^\mu
\end{split}\end{equation}
and as  $E'(\xi)=-E'(-\xi)$ and  using the Fubini theorem we further conclude
\begin{equation}
I=\langle\langle E'(\lambda- \mu)
(f^n(t,x,\lambda)-f^n(t,x,\mu)), \nu^{\delta,\varepsilon}_{(t,x)}(\lambda)\rangle,   \sigma^{\delta,\varepsilon}_{(t,x)}(\mu)\rangle +{\cal R}_{\varepsilon,\delta,n}^\lambda+{\cal R}_{\varepsilon,\delta,n}^\mu. 
\end{equation}
Note that the function $E'(\lambda- \mu)
(f^n(t,x,\lambda)-f^n(t,x,\mu))$ is continuous, although $E'(\lambda- \mu)=\sgn(\lambda-\mu)$ is not continuous
for $\lambda-\mu=0$.
We shall estimate the error ${\cal R}_{\varepsilon,\delta,n}^\lambda$, the estimates for ${\cal R}_{\varepsilon,\delta,n}^\mu$ follow the same lines. 
Then
\begin{equation}\label{error-es}
\begin{split}
|{\cal R}_{\varepsilon,\delta,n}^\lambda|&\le
|\langle(\omega_1^{\delta}\cdot \omega_2^{\varepsilon})*\langle E'(\lambda- \mu)
(f(t,x,\lambda)-f^n(t,x,\lambda), \nu_{(t,x)}(\lambda)\rangle,   \sigma^{\delta,\varepsilon}_{(t,x)}(\mu)\rangle|\\
&
+|\int_\R\int_{\R\times \R^N}\omega_1^\delta(s)\omega_2^\varepsilon(y)\langle E'(\lambda-\mu)
f^n(t-s,x-y,\lambda),\nu_{(t-s,x-y)}(\lambda)\rangle \; dy\; ds \; d\sigma^{\delta,\varepsilon}_{(t,x)}(\mu)\\
&-\int_\R\langle E'(\lambda-\mu) f^n(t,x,\lambda),\nu^{\delta,\varepsilon}_{(t,x)}(\lambda)\rangle \; d\sigma^{\delta,\varepsilon}_{(t,x)}(\mu) |\\
&\le\sup\limits_{\lambda\in K}\|f(\cdot,\cdot,\lambda)-f^n(\cdot,\cdot,\lambda)\|_{L^1(\Rdp)}\\&+
 \sup\limits_{|t-s|\le\delta,\,|x-y|\le\varepsilon}|f^n(t-s,x-y,\lambda)-f^n(t,x,\lambda)| 
\\
&\le L_K(n)+ {\cal W}_K^n(\delta,\varepsilon).
\end{split}\end{equation}

Thus, multiplying \eqref{regf} by an arbitrary fixed nonnegative $\psi \in \mathcal{D}((2\varepsilon,\infty )\times \R^N)$, integrating the result over $\Rdp$ and using integration by parts, we find that
\begin{equation}
\begin{split}
-\int_{\Rdp} &\left(\left\langle \omega_2^{\varepsilon}*\langle |\eta(x,\lambda)-\eta(x,\mu)|, \nu_{(t,x)}^\delta(\lambda)\rangle_{t}, \sigma_{(t,x)}^{\delta,\varepsilon}(\mu)\right\rangle\right.\\&+
\left.\left\langle \omega_2^{\varepsilon}*\langle |\eta(x,\lambda)-\eta(x,\mu)|, \sigma_{(t,x)}^\delta(\mu)\rangle_{t}, \nu_{(t,x)}^{\delta,\varepsilon}(\lambda)\right\rangle\right)\psi\; dx\; dt \\
& +\int_{\Rdp}\left\langle  Q(\lambda,\mu), \nu_{(t,x)}^{\delta,\varepsilon}(\lambda)\otimes \sigma_{(t,x)}^{\delta,\varepsilon}(\mu)\right\rangle
\cdot \nabla \psi \; dx \; dt \\
&\ge -
\int_{\Rdp} 
  \langle E'(\lambda- \mu)
(f^n(t,x,\lambda)-f^n(t,x, \mu)), \nu^{\delta,\varepsilon}_{(t,x)}(\lambda)\otimes  \sigma^{\delta,\varepsilon}_{(t,x)}(\mu)\rangle
%
%
%
\psi \; dx \; dt\\
&-2\int_{\Rdp}({\cal W}_K^n(\delta,\varepsilon)+L_K(n))  \psi \; dx \; dt
\end{split}\label{regf0}
\end{equation}

First, we let $\varepsilon\to0_+$. Then let $\Omega_\psi:=\supp \psi$.
From \eqref{ApE} it follows that there exists a compact set $K$ such that for $(t,x)\in\Omega_\psi$ we have
  $\supp\nu^\delta_{(t,x)}\subset K$ and then also $\supp\partial_t\nu^\delta_{(t,x)}\subset K$.
  The same holds for $\sigma^\delta_{(t,x)}$.
  
Since $\theta$ is bounded by some function independent of $x$, then there exists a function $h_3$, 
again independent of $x$, such that for all $x\in\R^N$ and all $v\in\R$
$$|\eta(x,v)|\le h_3(v),$$
which  provides that  $\eta\in L^\infty(\Rdp;{\mathcal C}(K))$, where $ {(t,x)\mapsto \eta(t,x,\cdot)}$, hence also
  $\eta\in L^1(\Omega_\psi;{\mathcal C}(K))$. Consequently  $E(\eta(x,\lambda)-\eta(x,\mu))\in L^1(\Omega_\psi;{\mathcal C}(K))$.
Thus we can extract a subsequence, that we do not relabel, such that
\begin{align*}
\omega_2^{\varepsilon}*\langle E(\eta(\cdot,\lambda)-\eta(\cdot,\mu)), \partial_t\nu^\delta\rangle&\to\langle 
E(\eta(\cdot,\lambda)-\eta(\cdot,\mu)), \partial_t\nu^\delta\rangle
&&\mbox{ strongly in }&& L^1(\Omega_\psi;\mathcal{C}(K)),\\
\omega_2^{\varepsilon}*\langle E(\eta(\cdot,\lambda)-\eta(\cdot,\mu)), \partial_t\sigma^\delta\rangle&\to\langle E(\eta(\cdot,\lambda)-\eta(\cdot,\mu)), \partial_t\sigma^\delta\rangle
&&\mbox{ strongly in }&&L^1(\Omega_\psi;\mathcal{C}(K)),\\
\sigma^{\delta,\varepsilon}&\rightharpoonup^*\sigma^{\delta}&&\mbox{ weakly$^*$ in }&&
L^\infty_{w}(\Omega_\psi;{\mathcal M}(K)),\\
\nu^{\delta,\varepsilon}&\rightharpoonup^*\nu^{\delta}&&\mbox{ weakly$^*$ in }&&
L^\infty_{w}(\Omega_\psi;{\mathcal M}(K)),\\
\end{align*}
as $\varepsilon\to0$.
Using these convergence results, we observe from \eqref{regf0} that
\begin{equation}\label{regf2}
\begin{split}
&-\int_{\Rdp}  \langle \langle E(\eta(x,\lambda)-\eta(x,\mu)), \nu_{(t,x)}^{\delta}(\lambda)\rangle_{t},
 \sigma_{(t,x)}^{\delta}(\mu)\rangle \psi\; dx\; dt\\
 &\qquad-\int_{\Rdp}
\langle\langle E(\eta(x,\lambda)-\eta(x,\mu)), \sigma_{(t,x)}^{\delta}(\mu)\rangle_{t},
\nu_{(t,x)}^{\delta}(\lambda)\rangle\psi\; dx\; dt \\
 &\qquad +\int_{\Rdp}\langle  \bQ(\lambda,\mu), \nu_{(t,x)}^{\delta}(\lambda)\otimes
  \sigma_{(t,x)}^{\delta}(\mu)\rangle
\cdot \nabla \psi \; dx \; dt \\ 
&\qquad\ge -
\int_{\Rdp}
  \langle E'(\lambda- \mu)
(f^n(t,x,\lambda)-f^n(t,x, \mu)), \nu^\delta_{(t,x)}(\lambda)\otimes   \sigma^{\delta}_{(t,x)}(\mu)\rangle
%
%
%
\psi \; dx \; dt\\
&\qquad- 2
\int_{\Rdp} ({\cal W}_K^n(\delta, 0)+L_K(n))
  \psi \; dx \; dt.
\end{split}\end{equation}
Similarly to~\eqref{div} it is not difficult to observe that
\begin{equation}
\begin{split}
  \langle  E(\eta(x,\lambda)-\eta(x,\mu)), \nu_{(t,x)}^\delta\otimes \sigma_{(t,x)}^\delta\rangle_{t}&=\langle \langle E(\eta(x,\lambda)-\eta(x,\mu)), \nu_{(t,x)}^{\delta}\rangle, \sigma_{(t,x)}^{\delta}\rangle_{t}\\
&=\langle \omega^{\delta}*\langle E(\eta(x,\lambda)-\eta(x,\mu)), \nu_{(t,x)}\rangle, \sigma_{(t,x)}^\delta\rangle_{t}
\\
&=\left\langle ( \omega^{\delta}*\langle \zeta,  \nu_{(t,x)} \rangle)_{t}, \sigma_{(t,x)}^\delta\right\rangle+
\left \langle (\omega^{\delta}*\langle \zeta,  \sigma_{(t,x)}
\rangle)_{t},\nu_{(t,x)}^\delta \right\rangle.
\end{split}\label{chain}
\end{equation}
Thus, using  \eqref{regf2}, \eqref{chain} and integrating by parts with respect to $t$, we find that
\begin{equation*}
\begin{split}
&\int_{\Rdp}  \langle E(\eta(x,\lambda)-\eta(x,\mu)), \nu_{(t,x)}^\delta(\lambda)\otimes
\sigma_{(t,x)}^\delta(\mu)\rangle\psi_{t}\; dx\; dt \\
 &\qquad +\int_{\Rdp}\langle  \bQ(\lambda,\mu), \nu_{(t,x)}^\delta(\lambda)\otimes
 \sigma_{(t,x)}^\delta(\mu) \rangle
\cdot \nabla \psi \; dx \; dt   \\
&\qquad\ge-\int_{\Rdp}
  \langle E'(\lambda- \mu)
(f^n(t,x,\lambda)-f^n(t,x, \mu)), \nu^\delta_{(t,x)}(\lambda)\otimes   \sigma^\delta_{(t,x)}(\mu)\rangle
\\
&\qquad- 2
\int_{\Rdp} ({\cal W}_K^n(\delta,0)+L_K(n))
  \psi \; dx \; dt.
\end{split}
\end{equation*}
Letting $\delta \to 0_{+}$ we conclude  by the argument of weak$^*$  convergence of  measures $ \nu^\delta$ and $\sigma^\delta$ to $\nu$ and $\sigma$, respectively and 
${\cal W}_K^n(\delta,0)\to0$.
\begin{equation*}
\begin{split}
&\int_{\Rdp}  \langle E(\eta(x,\lambda)-\eta(x,\mu)), \nu_{(t,x)}(\lambda)\otimes
\sigma_{(t,x)}(\mu)\rangle\psi_{t}\; dx\; dt \\
 &\qquad +\int_{\Rdp}\langle  \bQ(\lambda,\mu), \nu_{(t,x)}(\lambda)\otimes
 \sigma_{(t,x)}(\mu) \rangle
\cdot \nabla \psi \; dx \; dt   \\
&\qquad\ge-\int_{\Rdp}
  \langle E'(\lambda- \mu)
(f^n(t,x,\lambda)-f^n(t,x, \mu)), \nu_{(t,x)}(\lambda)\otimes   \sigma_{(t,x)}(\mu)\rangle  \psi \; dx \; dt
\\
&\qquad-2
\int_{\Rdp} L_K(n)
  \psi \; dx \; dt.
\end{split}
\end{equation*}

In the final step we let $n\to\infty$ and since $f^n\to f$ in $L^1(\Omega_\psi;{\cal C}(K))$ we  obtain \eqref{begin3}.

\end{proof}

\subsection{Comparison and contraction principles  for entropy  weak solutions}
In the next lemma we included contraction and comparison principle for entropy weak solutions. In order not to 
involve the method of doubling the variables for weak solutions we use as much as possible the results obtained for measure-valued solutions. Here we consider the solutions $v_1$ and $v_2$ corresponding to the problems with different right-hand side. The purpose is to work later with  approximated problems, where the source term shall be perturbed with a strictly monotone term and for the sake of constructing monotone families of approximated sequence we shall be interested in different parameters. 
\begin{lemma}\label{22}
Assume that $v_1$,  $v_2$  are two entropy weak solutions to \eqref{E1}
with a right-hand side $f_1$ and $f_2$ respectively. 
 Then 
 \begin{enumerate}
\item for all nonnegative $ {\psi} \in \mathcal{D}(\Rdp)$ it holds
\begin{equation}
\begin{split}
&\int_{\Rdp} |\eta(x,v_1)-\eta(x,v_2)| \psi_{t}(t,x)\; dx\; dt\\
&\quad + \int_{\Rdp} \sgn(v_1-v_2)(A(v_1)-A(v_2)) \cdot \nabla \psi(t,x)\; dx \; dt \\
&\quad +\int_{\Rdp} \sgn(v_1-v_2)(f_1(t,x,v_1)-f_2(t,x,v_2))\psi(t,x)\;dx\;dt\\
&\quad \ge-\int_{\{(t,x):v_1=v_2\}} |f_1(t,x,v_1)-f_2(t,x,v_2)|\psi(t,x)\;dx\;dt
\label{begin1}
\end{split}
\end{equation}
\item for all nonnegative $ {\psi} \in \mathcal{D}(\Rdp)$ it holds
\begin{equation}
\begin{split}
&\int_{\Rdp} (\eta(x,v_1)-\eta(x,v_2))^+ \psi_{t}(t,x)\; dx\; dt\\
&\quad + \int_{\Rdp} \chi_{\{v_1>v_2\}}(A(v_1)-A(v_2)) \cdot \nabla \psi(t,x)\; dx \; dt \\
&\quad +\int_{\Rdp} \chi_{\{v_1>v_2\}}(f_1(t,x,v_1)-f_2(t,x,v_2))\psi(t,x)\;dx\;dt\\
&\quad \ge -\int_{\{(t,x):v_1=v_2\}} (f_1(t,x,v_1)-f_2(t,x,v_2))^+\psi(t,x)\;dx\;dt 
\label{begin2}
\end{split}
\end{equation}

\end{enumerate}

\end{lemma}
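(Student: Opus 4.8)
The plan is to deduce both estimates from the averaged contraction principle of Lemma~\ref{contraction}, applied to the Dirac measure-valued solutions generated by $v_1$ and $v_2$, while keeping careful track of the fact that the two solutions now carry \emph{different} source terms $f_1$ and $f_2$. First I would record that an entropy weak solution in the sense of Definition~\ref{weak} with source $f_i$ induces an entropy measure-valued solution in the sense of Definition~\ref{DF2}: setting $\nu_{(t,x)}:=\delta_{v_1(t,x)}$ and $\sigma_{(t,x)}:=\delta_{v_2(t,x)}$, the inequalities \eqref{EQ+}--\eqref{EQ-} for $\nu$ (resp.\ $\sigma$) are precisely conditions $(i)$--$(ii)$ of Definition~\ref{weak} tested with $k=\mu$, with $f=f_1$ (resp.\ $f=f_2$), the support bound \eqref{ApE} being immediate from $v_i\in L^\infty$. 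Under the pairing with Dirac masses one has $\langle E(\eta(x,\lambda)-\eta(x,\mu)),\nu\otimes\sigma\rangle=E(\eta(x,v_1)-\eta(x,v_2))$ and $\langle Q(\lambda,\mu),\nu\otimes\sigma\rangle=\sgn(v_1-v_2)(A(v_1)-A(v_2))$, so the first two integrals in \eqref{begin1} and \eqref{begin2} appear automatically.

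Next I would re-run the doubling-of-variables computation of Lemma~\ref{contraction} almost verbatim, the only change being that the regularized inequality \eqref{reg2} now carries $f_1$ (it comes from $\nu$) while \eqref{reg3} carries $f_2$ (it comes from $\sigma$), so that after the symmetrization $E'(\xi)=-E'(-\xi)$ and Fubini the combined source integrand becomes $E'(\lambda-\mu)(f_1^n(t,x,\lambda)-f_2^n(t,x,\mu))$. For \eqref{begin1} I sum \eqref{EQ+} and \eqref{EQ-} on each side, giving $E(\xi)=|\xi|$, $E'=\sgn$, exactly as in Lemma~\ref{contraction}; for the comparison estimate \eqref{begin2} I instead pair \eqref{EQ+} for $\nu$ with \eqref{EQ-} for $\sigma$, which produces the semi-Kru\v zkov entropy $E(\xi)=\xi^+$ with $E'=\chi_{\{\cdot>0\}}$ and the flux $\chi_{\{v_1>v_2\}}(A(v_1)-A(v_2))$. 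Since $\psi\in\mathcal D(\Rdp)$ is supported away from $t=0$, no initial term appears. All the $\varepsilon\to0$ and $\delta\to0$ passages, together with the error bound \eqref{error-es}, are identical to Lemma~\ref{contraction}, so the entropy and flux parts converge to the claimed integrals; everything hinges on the source term.

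The hard part is precisely this source term, because $E'(\lambda-\mu)(f_1^n(\lambda)-f_2^n(\mu))$ is, unlike in Lemma~\ref{contraction}, \emph{not} continuous across the diagonal $\{\lambda=\mu\}$ once $f_1\neq f_2$, so weak$^*$ convergence of the regularized Young measures no longer controls it. I would split it as $E'(\lambda-\mu)(f_1^n(\lambda)-f_1^n(\mu))+E'(\lambda-\mu)(f_1^n(\mu)-f_2^n(\mu))$: the first summand vanishes continuously across the diagonal and therefore passes to the limit exactly as before, contributing $\sgn(v_1-v_2)(f_1(v_1)-f_1(v_2))$. The second summand is the genuinely new term; introducing $F^{\delta,\varepsilon}(\mu):=\langle\sgn(\lambda-\mu),\nu^{\delta,\varepsilon}\rangle\in[-1,1]$, which is monotone in $\mu$ and tends to $\sgn(v_1-\mu)$ at every continuity point, I would let $\varepsilon,\delta\to0$ and identify its limit pointwise in $(t,x)$ through a Lebesgue-point argument: on $\{v_1\neq v_2\}$ the mollified masses separate and it equals $\sgn(v_1-v_2)(f_1(v_2)-f_2(v_2))$, whereas on $\{v_1=v_2\}$ it is merely bounded, of the form $c(t,x)(f_1-f_2)$ with $|c|\le1$ (respectively $c\in[0,1]$ in the $(\cdot)^+$ case, since $\chi_{\{\cdot>0\}}\in[0,1]$). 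Adding the two contributions reconstitutes the full off-diagonal source $\sgn(v_1-v_2)(f_1(v_1)-f_2(v_2))$, and the leftover diagonal term is absorbed via $-c(f_1-f_2)\ge-|f_1-f_2|$ (respectively $\ge-(f_1-f_2)^+$) together with $\psi\ge0$, yielding exactly the right-hand sides of \eqref{begin1} and \eqref{begin2}. The main technical obstacle is this pointwise identification of the discontinuous source limit, which is the only step that does not simply transcribe from the proof of Lemma~\ref{contraction}.
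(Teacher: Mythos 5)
Your proposal is correct and follows essentially the same route as the paper: both reduce to the doubling-of-variables machinery of Lemma~\ref{contraction} applied to the Dirac measures $\delta_{v_1}$, $\delta_{v_2}$ (summing the two semi-entropy inequalities for \eqref{begin1}, pairing the convex one for $v_1$ with the concave one for $v_2$ for \eqref{begin2}), and both isolate the discontinuity of the source integrand at the diagonal by treating the set $\{v_1=v_2\}$ separately and bounding its contribution by $|f_1-f_2|$, resp.\ $(f_1-f_2)^+$. The only cosmetic difference is that the paper splits the $(t,x)$-integral into $\{v_1\neq v_2\}$ and $\{v_1=v_2\}$ directly, whereas you split the integrand in $(\lambda,\mu)$; this leads to the same case analysis and the same right-hand sides.
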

\begin{proof}
If $v_1,v_2$ are entropy weak solutions, then the Dirac masses  $\delta_{v_1(t,x)}$ and $\delta_{v_2(t,x)}$ 
are corresponding entropy measure-valued solutions. 
Repeating step by step the argumentation from previous lemma we arrive at 

\begin{equation}
\begin{split}
-\int_{\Rdp} &\left(\left\langle \omega_2^{\varepsilon}*\langle |\eta(x,\lambda)-\eta(x,\mu)|, \delta_{v_1(t,x)}^\delta(\lambda)\rangle_{t}, \delta_{v_2(t,x)}^{\delta,\varepsilon}(\mu)\right\rangle\right.\\&+
\left.\left\langle \omega_2^{\varepsilon}*\langle |\eta(x,\lambda)-\eta(x,\mu)|, \delta_{v_2(t,x)}^\delta(\mu)\rangle_{t}, \delta_{v_1(t,x)}^{\delta,\varepsilon}(\lambda)\right\rangle\right)\psi\; dx\; dt \\
& +\int_{\Rdp}\left\langle  \sgn(\lambda-\mu)(A(\lambda)-A(\mu)), \delta_{v_1(t,x)}^{\delta,\varepsilon}(\lambda)\otimes \delta_{v_2(t,x)}^{\delta,\varepsilon}(\mu)\right\rangle
\cdot \nabla \psi \; dx \; dt \\
&\ge -
\int_{\Rdp}
 \sgn(\lambda- \mu)
(f_1^n(t,x,\lambda)-f_2^n(t,x, \mu)), \delta_{v_2(t,x)}^{\delta,\varepsilon}(\mu)\otimes  \delta_{v_1(t,x)}^{\delta,\varepsilon}(\lambda)\rangle
%
%
%
\psi \; dx \; dt\\
&+\int_{\Rdp}
({\cal K }_{\varepsilon,\delta,n}^\lambda+{\cal K}_{\varepsilon,\delta,n}^\mu)\psi \; dx \; dt
\end{split}\label{regf00-weak}
\end{equation}
where
\begin{equation}\begin{split}
{\cal K}_{\varepsilon,\delta,n}^\lambda:=&
\langle(\omega_1^{\delta}\cdot \omega_2^{\varepsilon})*\langle E'(\lambda- \mu)
(f_1(t,x,\lambda)-f_1^n(t,x,\lambda), \delta_{v_1(t,x)}(\lambda)\rangle,   \delta^{\delta,\varepsilon}_{v_2(t,x)}(\mu)\rangle\\
+&
\langle(\omega_1^{\delta}\cdot \omega_2^{\varepsilon})*\langle E'(\lambda- \mu)
f_1^n(t,x,\lambda), \delta_{v_1(t,x)}(\lambda)\rangle,   \delta^{\delta,\varepsilon}_{v_2(t,x)}(\mu)\rangle\\
-&\langle\langle E'(\lambda- \mu)
f_1^n(t,x,\lambda), \delta^{\delta,\varepsilon}_{v_1(t,x)}(\lambda)\rangle,   \delta^{\delta,\varepsilon}_{v_2(t,x)}(\mu)\rangle
\end{split}\end{equation}
and 
\begin{equation}\begin{split}
{\cal K}_{\varepsilon,\delta,n}^\mu:=&
\langle(\omega_1^{\delta}\cdot \omega_2^{\varepsilon})*\langle E'(\mu-\lambda)
(f_2(t,x,\mu)-f_2^n(t,x,\mu), \delta_{v_2(t,x)}(\mu)\rangle, \delta^{\delta,\varepsilon}_{v_1(t,x)}(\lambda)  \rangle\\
+&
\langle(\omega_1^{\delta}\cdot \omega_2^{\varepsilon})*\langle E'(\mu-\lambda)
f^n_2(t,x,\mu), \delta_{v_2(t,x)}(\mu)\rangle,   \delta^{\delta,\varepsilon}_{v_1(t,x)}(\lambda)\rangle\\
-&\langle\langle E'(\mu-\lambda)
f^n_2(t,x,\mu), \delta^{\delta,\varepsilon}_{v_2(t,x)}(\mu)\rangle,   \delta^{\delta,\varepsilon}_{v_1(t,x)}(\lambda)\rangle
\end{split}\end{equation}

In what follows we shall only concentrate on the first integral on the right hand side of \eqref{regf00-weak}. 
The error estimates follow the same lines as \eqref{error-es}.

Since the function $ \sgn(\lambda- \mu)
(f_1^n(t,x,\lambda)-f_2^n(t,x, \mu))$ may fail to be continuous for $\lambda=\mu$, hence we shall discuss separately the integrals 
\begin{equation}
I_1^{\delta,\varepsilon}:=\int_{\{(t,x):v_1\neq v_2\}}  \langle\sgn(\lambda- \mu)
(f_1^n(t,x,\lambda)-f_2^n(t,x, \mu)), \delta_{v_2(t,x)}^{\delta,\varepsilon}(\mu)\otimes  \delta_{v_1(t,x)}^{\delta,\varepsilon}(\lambda)\rangle
\psi \; dx \; dt\\
\end{equation}
and
\begin{equation}
I_2^{\delta,\varepsilon}:=\int_{\{(t,x):v_1=v_2\}}\langle \sgn(\lambda- \mu)
(f_1^n(t,x,\lambda)-f_2^n(t,x, \mu)), \delta_{v_2(t,x)}^{\delta,\varepsilon}(\mu)\otimes  \delta_{v_1(t,x)}^{\delta,\varepsilon}(\lambda)\rangle
\psi \; dx \; dt.\\
\end{equation}
We let $\varepsilon\to0_+$ and $\delta\to0_+$. Then
\begin{equation}\begin{split}
\lim\limits_{\delta\to0}\lim\limits_{\varepsilon\to0} I_1^{\delta,\varepsilon}&=
\int_{\{(t,x):v_1\neq v_2\}}  \langle\sgn(\lambda- \mu)
(f_1^n(t,x,\lambda)-f_2^n(t,x, \mu)), \delta_{v_2(t,x)}(\mu)\otimes  \delta_{v_1(t,x)}(\lambda)\rangle
\psi \; dx \; dt\\
&=\int_{\Rdp}  \sgn(v_1- v_2)
(f_1^n(t,x,v_1)-f_2^n(t,x, v_2))
\psi \; dx \; dt.
\end{split}\end{equation}
where the last equality holds since $\sgn 0=0$.
The second integral can be estimated as follows
\begin{equation}\begin{split}
|I_2^{\delta,\varepsilon}|&\le\int_{\{(t,x):v_1=v_2\}} |\langle \sgn(\lambda- \mu)
(f_1^n(t,x,\lambda)-f_2^n(t,x, \mu)), \delta_{v_2(t,x)}^{\delta,\varepsilon}(\mu)\otimes  \delta_{v_1(t,x)}^{\delta,\varepsilon}(\lambda)\rangle|
\psi \; dx \; dt\\
&\le 
\int_{\{(t,x):v_1=v_2\}} \langle |
f_1^n(t,x,\lambda)-f_2^n(t,x, \mu)|, \delta_{v_2(t,x)}^{\delta,\varepsilon}(\mu)\otimes  \delta_{v_1(t,x)}^{\delta,\varepsilon}(\lambda)\rangle
\psi \; dx \; dt\\
\end{split}\end{equation}
and therefore 
\begin{equation}\begin{split}
\lim\limits_{\delta\to0}\lim\limits_{\varepsilon\to0} I_2^{\delta,\varepsilon}\le
 \int_{\{(t,x):v_1=v_2\}} 
 |f_1^n(t,x,v_1)-f_2^n(t,x, v_2)| \psi \; dx \; dt.
\end{split}\end{equation}
which completes the proof of point 1.

 To prove the second part of the theorem, again we shall argue on the level of measure-valued solutions. Now we will use the entropy inequalities both for convex and concave entropies. First observe that for all $\mu\in\R$
\begin{equation}
\begin{split}
 \omega_2^{\varepsilon}*&\langle (\eta(x,\lambda)-\eta(x,\mu))^+, \delta_{v_1(t,x)}^\delta(\lambda)\rangle_{t}
+\div \langle  \chi_{\{\lambda>\mu\}}(A(\lambda)-A(\mu)), \delta_{v_1(t,x)}^{\delta,\varepsilon}(\lambda)\rangle
 \\
&\le  (\omega_1^{\delta}\cdot \omega_2^{\varepsilon})*\langle \chi_{\{\lambda> \mu\}}
f_1(t,x,\lambda), \delta_{v_1(t,x)}(\lambda)\rangle\rangle
\\
\end{split}
\label{regf39}
\end{equation}
and all $\lambda\in\R$
\begin{equation}
\begin{split}
- \omega_2^{\varepsilon}*&\langle (\eta(x,\mu)-\eta(x,\lambda))^-, \delta_{v_2(t,x)}^\delta(\mu)\rangle_{t}
+\div \langle  \chi_{\{\mu<\lambda\}}(A(\mu)-A(\lambda)), \delta_{v_2(t,x)}^{\delta,\varepsilon}(\mu)\rangle
 \\
&\ge  (\omega_1^{\delta}\cdot \omega_2^{\varepsilon})*\langle \chi_{\{\mu< \lambda\}}
f_2(t,x,\mu), \delta_{v_2(t,x)}(\mu)\rangle\rangle.
\end{split}
\label{regf40}
\end{equation}
Hence multiplying \eqref{regf40} by -1 and   {adding it to} \eqref{regf39} we obtain
\begin{equation}
\begin{split}
\langle \omega_2^{\varepsilon}*&\langle (\eta(x,\lambda)-\eta(x,\mu))^+, \delta_{v_1(t,x)}^\delta(\lambda)\rangle_{t}, \delta_{v_2(t,x)}^{\delta,\varepsilon}(\mu)\rangle\\
&+\langle \omega_2^{\varepsilon}*\langle (\eta(x,\lambda)-\eta(x,\mu))^+, \delta_{v_2(t,x)}^\delta(\mu)\rangle_{t}, \delta_{v_1(t,x)}^{\delta,\varepsilon}(\lambda)\rangle\\
&+\div \langle  \chi_{\{\lambda>\mu\}}(A(\lambda)-A(\mu)), \delta_{v_1(t,x)}^{\delta,\varepsilon}(\lambda)\otimes
 \delta_{v_2(t,x)}^{\delta,\varepsilon}(\mu)\rangle 
 \\
&\le \langle (\omega_1^{\delta}\cdot \omega_2^{\varepsilon})*\langle \chi_{\{\lambda> \mu\}}
f_1(t,x,\lambda), \delta_{v_1(t,x)}(\lambda)\rangle,   \delta^{\delta,\varepsilon}_{v_2(t,x)}(\mu)\rangle
\\
&-
\langle (\omega_1^{\delta}\cdot \omega_2^{\varepsilon})*\langle \chi_{\{\lambda> \mu\}}
f_2(t,x,\mu), \delta_{v_2(t,x)}(\mu)\rangle,   \delta^{\delta,\varepsilon}_{v_1(t,x)}(\lambda)\rangle
\end{split}
\end{equation}
%
We repeat the same arguments as in the previous part of the proof. 

\end{proof}

\section{Existence of entropy weak solutions}\label{existence}
{\bf Proof of Theorem~\ref{main}.}
The proof starts with the existence of entropy measure-valued solution, then we shall show that it is  unique and is in fact an entropy weak solution. 

We construct the approximate problem. Let now $A^j$ be a sequence of smooth functions such that for every compact set $K\subset \R$ 
\begin{equation}
A^j\to A\quad {\rm strongly\ in\  }  {{\cal C}(K;\mathbb{R}^N)}.
\end{equation}
Let $\theta^*$ be a  minimal selection of the graph of $ \theta $.
We approximate $\theta$ in two steps. First we shall construct the Yosida approximation of $\theta$ with a parameter $\sqrt j$ and then mollify this $\theta_{\sqrt j}$ with respect to $x$ and $u$. Therefore let us define
\begin{equation}
 {J_{\frac{1}{\sqrt j}}=({\rm id}+{\frac{1}{\sqrt j}}\,\theta)^{-1}}
\end{equation}
and
\begin{equation}
 {\theta_{\frac{1}{\sqrt j}}=\sqrt{j}\ ({\rm id}-J_{\frac{1}{\sqrt j}})}.
\end{equation}
Then
\begin{equation}
\theta^{(j)}(x,u):= \int_{\R^N\times\R} \omega^{\frac{1}{j}}(x-y,u-z)\theta_{\frac{1}{\sqrt j}}(y,z)\;dy\;dz,
\end{equation}
where $\omega^{\frac{1}{j}}$ is the standard mollification kernel of radius $\frac{1}{j}$.
To provide that the approximation vanishes at zero define
\begin{equation}
\theta^j(x,u):=\theta^{(j)}(x,u)-\theta^{(j)}(x,0). 
\end{equation}
Observe that with such a choice of parameters we get 
\begin{equation}\label{theta-zero}
\theta^{(j)}(\cdot,0)\to 0\quad \mbox {a.e. in} \ \R^N
\end{equation}
 and we denote by $\eta^{j}(x,z)$ the inverse function to $\theta^{j}(x,u)$, i.e., $\eta^{j}(x,\theta^{j}(x,u))=u$.
Moreover, let
\begin{equation}
f^{(j)}(t,x,u):= \int_{\R\times\R^N\times \R} \omega^{\frac{1}{j}}(t-s,x-y,u-z)f^{(j)}(s,y,z)\;ds\;dy\;dz
\end{equation}
and define 
\begin{equation}
f^j(t,x,u):=f^{(j)}(t,x,u)-f^{(j)}(t,x,0).
\end{equation}
Moreover, we will add a strictly dissipative perturbation term defined as follows
$$\varphi_{\ell,m}(r):=\frac{1}{\ell}\arctan(r^-)-\frac{1}{m}\arctan(r^+).$$
Hence the approximate problem has a form 
\begin{align}
 u^j_t+ \div A^j(\theta^j(x,u^j))=
f^j(t,x,u^j)+\varphi_{\ell,m}(\theta^j(x,u^j))&\quad \mbox{ on } \R_+\times \R^N,\\
u^j(0,\cdot)= u_0 &\quad\mbox{ on }  \R^N.
\end{align}
We will divide the proof into three steps. In the first step we shall concentrate on existence of measure-valued solutions (namely we will pass with $j\to\infty$), in the second step we will show that the measure-valued solution is indeed an entropy weak solution to the problem with a strictly dissipative  perturbation  and in the final third step we will pass to the limit with $\ell,m\to\infty$ and conclude existence of entropy weak solution to the original problem.

{\bf Step 1.} Existence of solutions is provided by the classical theory of Kru\v zkov, cf.~\cite{Kr70}. 
Since condition \eqref{betaas} holds, 
with the standard estimates one gets that for any $j$
 {$\theta^j$, $\frac{\partial}{\partial x_i}\theta^j=\theta_{\frac{1}{\sqrt j}}*\frac{\partial}{\partial x_i}\omega^{\frac{1}{j}}$ is bounded in $\mathbb{R}^N\times[-M,M]$ for all $M>0$ and the assumptions of
~\cite{Kr70} are satisfied.}
By Lemma~\ref{equiv}
we can define 
\begin{equation}
v^j(t,x):=\theta^j(x,u^j(t,x))
\end{equation}
which satisfies  for all nonnegative $\psi\in\D(\R\times\R^N)$
 the entropy inequality
 
 \begin{equation}
\begin{split}\label{entropy-v}
&\int_{\R_+\times\R^N}|\eta^j(x,v^j(t,x))-\eta^j(x,k)|\psi_{t}(t,x)\; dx \; dt\\
&\qquad+\int_{\R_+\times\R^N}\sgn(v^j(t,x)-k)(\bG^j(v^j(t,x))-\bG^j(k))\cdot \nabla \psi(t,x) \; dx \; dt\\
&\qquad +\int_{\R_+\times\R^N}\sgn (v^j(t,x)-k) (f^j(t,x,\eta^j(x,v^j))+\varphi_{\ell,m}(v^j))\psi \;dx\;dt\\
&\qquad+\int_{\R^N}|u_0(x)-\eta^j(x,k)|\psi(0,x) \;dx\ge 0.
\end{split}
\end{equation}

Since $u^j$ is bounded in $L^\infty(\R^+\times \R^N)$, then  by \eqref{h1h2} the sequence $v^j$
is also bounded. 
From the entropy inequality \eqref{entropy-v} we want to pass to the following entropy inequalities

 \begin{equation}\label{ap-plus}
\begin{split}
&\int_{\R_+\times\R^N}(\eta^j(x,v^j(t,x))-\eta^j(x,k))^+\psi_{t}(t,x)\; dx \; dt\\
&\qquad+\int_{\R_+\times\R^N}\chi_{\{v^j(t,x)>k\}}(\bG^j(v^j(t,x))-\bG^j(k))\cdot \nabla \psi(t,x) \; dx \; dt\\
&\qquad +\int_{\R_+\times\R^N}\chi_{\{v^j(t,x)>k\}} (f^j(t,x,\eta^j(x,v^j))+\varphi_{\ell,m}(v^j))\psi \;dx\;dt\\
&\qquad+\int_{\R^N}{(u_0(x)-\eta^j(x,k))}^+\psi(0,x) \;dx\ge 0
\end{split}
\end{equation}

and 

 \begin{equation}\label{ap-minus}
\begin{split}
&\int_{\R_+\times\R^N}(\eta^j(x,v^j(t,x))-\eta^j(x,k))^-\psi_{t}(t,x)\; dx \; dt\\
&\qquad+\int_{\R_+\times\R^N}\chi_{\{v^j(t,x)<k\}}(\bG^j(v^j(t,x))-\bG^j(k))\cdot \nabla \psi(t,x) \; dx \; dt\\
&\qquad +\int_{\R_+\times\R^N}\chi_{\{v^j(t,x)<k\}} (f^j(t,x,\eta^j(x,v^j))+\varphi_{\ell,m}(v^j))\psi \;dx\;dt\\
&\qquad+\int_{\R^N}{(u_0(x)-\eta^j(x,k))}^-\psi(0,x)\; dx\le 0
\end{split}
\end{equation}
satisfied for all nonnegative $\psi\in\D(\R\times\R^N)$. For this purpose we first choose in \eqref{entropy-v} $k=\|v^j\|_{L^\infty}$ and $k=-\|v^j\|_{L^\infty}$, which allows to conclude  that the problem
\begin{equation}\label{dist}
\begin{split}
\eta^j(x,v^j)_t+\div A^j(v^j)&=f^j(t,x,\eta(x,v^j))+\varphi_{\ell,m}(v^j),\\
v^j(0,x)&=\theta^j(x,u_0)
\end{split}
\end{equation}
is satisfied in  a distributional sense. Obviously the following problem 
\begin{equation}\label{k}
\begin{split}
\eta^j(x,k)_t+\div A^j(k)&=0,
\end{split}
\end{equation}
with initial condition $k$ is satisfied in $\D'(\R\times\R^N)$.
Hence a linear combination of \eqref{entropy-v}, \eqref{dist} and \eqref{k} allows to conclude \eqref{ap-plus}
and \eqref{ap-minus}.

We want to pass to the limit with $j\to\infty$ in \eqref{ap-plus} (and  \eqref{ap-minus} respectively, which we do not present in detail since it is  easily concluded from the first part). 
Obviously, there exists a subsequence (labelled the same) and $v\in L^\infty(\R^+ \times \R^N)$ such that 
\begin{equation}\label{conv-g}
v^j\weakstar v\quad \mbox{in } L^\infty(\R^+ \times \R^N).
\end{equation}
Moreover there exists a Young measure $\nu_{(t,x)}$ associated to the subsequence $v^j$. 
In the remaining part of this step of the proof we will show that $\nu$ is an entropy measure-valued solution
in the sense of Definition~\ref{DF2}.  
To pass to the limit in  \eqref{ap-plus}   (\eqref{ap-minus} follows analogously) with the first terms on the right-hand side we first make an observation on $\theta^j$, namely due to \eqref{theta-zero} for all $(x,u)\in\R^N\times\R$ where $\theta$ is single-valued and continuous with respect to $u$
\begin{equation}
 {\theta^j\to\theta^{\ast}} 
\end{equation}
 {a.e. in $\mathbb{R}^N$. Hence there exists $M\subset\mathbb{R}^N$ such that $|M|=0$ such that 
\[\theta^j(x,\cdot)\rightarrow\theta^{\ast}(x,\cdot)\]
for all $x\in\mathbb{R}^N\setminus M$.}
The strict monotonicity of $\theta^j$ with respect to the last variable allows to conclude with help of Proposition~\ref{inverse-conv} for a.a. $x\in\R^N$ the locally uniform convergence of $\eta^j(x,\cdot)$. Define 
\begin{equation}
\zeta^j(x,s):=\left(\eta^j(x,s)-\eta^j(x,k)\right)^+
\end{equation}
and 
\begin{equation}\label{conv-eta}
\begin{split}
\lim\limits_{j\to\infty}\int_{\R_+ {\times \R^N\setminus M}}& \zeta^j(x,v^j)\;dx\;dt
=
\lim\limits_{j\to\infty}
\langle \zeta^j, v^j\rangle\\&=
\int_{\R_+\times \R^N}\int_\R(\eta(x,\lambda)-\eta(x,k))^+d\nu_{(t,x)}(\lambda) \;dx\;dt
\end{split}\end{equation}
where the duality pairing is understood between the spaces
$L^1(\R^d;\mathcal{C}((-R,R);\R))$ and $L^\infty_{w}(\R^d;{\mathcal M}([-R,R]))$. 
The limit passage in the second term of  \eqref{ap-plus} and  \eqref{ap-minus} follows the same lines as in~\cite{BuGwSw2013}.

 We direct our attention to the limit passage in the term containing $f^j$. The main problem is the appearance of a discontinuous function 
$$\lambda\mapsto\chi_{\{\lambda>\mu\}}(f^j(t,x,\eta^j(x,\lambda))+\varphi_{\ell,m}(\lambda)).$$ For this purpose  
we shall construct a family of functions which  allow to estimate the discontinuous term. We will call it  
$\chi^\gamma_{\{\lambda>\mu\}}$ and define as follows:
for $\mu\ge0$
\begin{equation}
\chi^{\gamma,+}_{\{\lambda>\mu\}}(\lambda):=\left\{
\begin{array}{rcl}
\chi_{\{\lambda>\mu\}}&{\rm for}& \lambda<\mu, \, \lambda\ge \mu+\gamma,\\[1ex]
{\rm  {affine}}&{\rm for} &\mu\le\lambda<\mu+\gamma.
\end{array}\right.\end{equation}
For $\mu<0$
\begin{equation}
\chi^{\gamma,-}_{\{\lambda>\mu\}}(\lambda):=\left\{
\begin{array}{rcl}
\chi_{\{\lambda>\mu\}}&{\rm for}& \lambda<\mu-\gamma, \, \lambda\ge \mu,\\[1ex]
{\rm  {affine}} &{\rm for} &\mu-\gamma\le\lambda<\mu.
\end{array}\right.\end{equation}
Note that since $f+\varphi_{\ell,m}$ are dissipative, then the above definition of 
$\chi^\gamma_{\{\lambda>\mu\}}$ provides that
\begin{equation}
\chi^\gamma_{\{\lambda>\mu\}} (f(t,x,\lambda)+\varphi_{\ell,m}(\lambda))\ge 
\chi_{\{\lambda>\mu\}} (f(t,x,\lambda)+\varphi_{\ell,m}(\lambda))
\end{equation}
for any $\lambda\in\R$, therefore inequality \eqref{ap-plus} with $\chi^\gamma_{\{\lambda>\mu\}}$ instead of $\chi_{\{\lambda>\mu\}}$ in the third term on the left-hand side holds. 
The convergence of convolutions and the dissipativity/monotonicity of $f$,  $f^j$ and $\eta, \eta^j$ provide that $f^j(t,x,\eta^j(x,\lambda))$ converges  {a.e} with respect to $t$ and $x$ and uniformly with respect to $\lambda$ on a bounded interval $[-R,R]$  to the function $f$, see Proposition~\ref{monotone-conv}, namely
\begin{equation}
f^j(\cdot,\cdot, \eta^j)\to f(\cdot,\cdot, \eta)\quad \mbox{strongly in }\quad L^1_{loc}(\R_+\times\R^N;
{\cal C}([-R,R]).
\end{equation}
We obtain that 
\begin{equation}\begin{split}
\lim\limits_{j\to\infty}\int_{\R_+\times\R^N}&\chi^\gamma_{\{v^j(t,x)>k\}} (f^j(t,x,\eta^j(x,v^j))+\varphi_{\ell,m}(v^j))\psi \;dx\;dt\\&= \int_{\Rdp}
\langle \chi^\gamma_{\{\lambda>\mu\}} (f(t,x,\lambda)+\varphi_{\ell,m}(\lambda)),\nu_{(t,x)}(\lambda)\rangle\psi \; dx\; dt.
\end{split}\end{equation}
Then we pass with $\gamma\to0_+$. The limit passage is obvious for those $ {\mu}$ 
that $\nu_{(t,x)}(\{ \mu\})\stackrel{{\rm a.e.}}{=}0$ on  $\supp\psi$.
Let again $\Omega_\psi:=\supp \psi$, where $\psi$ has compact support in $\R_+\times\R^N$ hence $|\Omega_\psi|<\infty$.  
We shall now concentrate on showing that  the set 
$$I:=\{\mu\in\R: |\{(t,x)\in\Omega_\psi: \nu_{(t,x)}(\{\mu\})>0\}|>0\}$$ 
is at most countable. Indeed, assume the opposite. 
If $\nu_{(t,x)}(\{\mu\})>0$ on some subset of $\R_+\times\R^N$  of positive measure, then 
$\int_{\Omega_\psi}\nu_{(t,x)}(\{\mu\})\;dx\;dt>0$, but also 
$$\sum\limits_{\mu\in I}\int_{\Omega_\psi}\nu_{(t,x)}(\{\mu\})\;dx\;dt\le \int_{\Omega_\psi}\int_\R 1
\;d\nu_{(t,x)}\;dx\;dt.$$
Since the set  $I$ is not countable, then the series diverges, but we know that the Young measure $\nu$ is a probability measure, therefore the right-hand side equals to $|\Omega_\psi|$ and we obtain a contradiction. 
Consequently the set $\R\setminus I$ is a dense set in $\R$.
 We conclude that for all $\mu\in\R\setminus I$ and all nonnegative $\psi\in\D(\R_+\times\R^N)$
\begin{equation}
\begin{split}
&\int_{\Rdp}\langle (\eta(x,\lambda)-\eta(x,\mu))^+,\nu_{(t,x)}(\lambda)
\rangle \psi_{t}(t,x)\; dx \; dt \\
&\quad + \int_{\Rdp}\langle \chi_{\{\lambda>\mu\}}(A(\lambda)-A(\mu))
 ,\nu_{(t,x)}(\lambda)\rangle \cdot \nabla \psi (t,x) \; dx\; dt \\
&\quad + \int_{\Rdp}
\langle \chi_{\{\lambda>\mu\}} (f(t,x,\lambda)+\varphi_{\ell,m}(\lambda)),\nu_{(t,x)}(\lambda)\rangle\psi \; dx\; dt\ge 0 . \label{EQ+2}
\end{split}
\end{equation}
To claim that the above inequality holds for all $\mu\in\R$ observe that the function
$$\mu\mapsto\int_{\Rdp}\langle (\eta(x,\lambda)-\eta(x,\mu))^+,\nu_{(t,x)}(\lambda)
\rangle \psi_{t}(t,x)\; dx \; dt $$
as well as
$$\mu\mapsto\int_{\Rdp}\langle \chi_{\{\lambda>\mu\}}(A(\lambda)-A(\mu))
 ,\nu_{(t,x)}(\lambda)\rangle \cdot \nabla \psi (t,x) \; dx\; dt $$
are continuous w.r.t. $\mu$. 
Observe now the function 
\begin{equation}\label{efy}
\mu\mapsto \int_{\Rdp}
\langle \chi_{\{\lambda>\mu\}} (f(t,x,\lambda)+\varphi_{\ell,m}(\lambda)),\nu_{(t,x)}(\lambda)\rangle\psi \; dx\; dt, 
\end{equation}
which is not continuous w.r.t. $\mu$, but one can notice it is decreasing/increasing depending on the sign of $\mu$. For this purpose let us split the integral as follows
\begin{equation}\begin{split}
\int_{\Rdp}
\int_{\R_+} \chi_{\{\lambda>\mu\}} (f(t,x,\lambda)+\varphi_{\ell,m}(\lambda)) \;d\nu_{(t,x)}(\lambda) \psi \; dx\; dt\\+
\int_{\Rdp}
\int_{\R_-} \chi_{\{\lambda>\mu\}} (f(t,x,\lambda)+\varphi_{\ell,m}(\lambda)) \;d\nu_{(t,x)}(\lambda) \psi \; dx\; dt.
\end{split}\end{equation}
Depending on the sign of $\mu$, always  the terms $\chi_{\{\lambda>\mu\}}$ in one of  the above integrals will be constant.  In the second integral, because of the dissipativity of $f$ and $\varphi_{\ell,m}$, we know the sign of the integrand, which allows to claim that the function \eqref{efy} is monotone w.r.t. $\mu$. 
Therefore if we take $\mu\in I, \mu>0$, then one can find a sequence $\mu^n$ such that
\begin{equation}\begin{split}
\lim\limits_{\mu^n\to\mu^-}&\int_{\Rdp}
\int_{\R}\chi_{\{\lambda>\mu^n\}}f(t,x,\lambda)d\nu_{(t,x)}(\lambda)\ge 
\int_{\Rdp}
\int_{\R}\chi_{\{\lambda>\mu\}}f(t,x,\lambda)d\nu_{(t,x)}(\lambda)\\
&\ge
\lim\limits_{\mu^n\to\mu^+}\int_{\Rdp}
\int_{\R}\chi_{\{\lambda>\mu^n\}}f(t,x,\lambda)d\nu_{(t,x)}(\lambda).
\end{split}\end{equation}
For $\mu\in I, \,\mu<0$ the inequalities hold in an opposite direction. Analogously one can show that \eqref{EQ-} holds. 

{\bf Step 2.} Let now $\nu, \sigma$ be two entropy measure-valued solutions. By Lemma~\ref{contraction} we obtain that \eqref{begin3} holds with
 $f(t,x,\lambda)+\varphi_{\ell,m}$ instead of $f(t,x,\lambda)$. 
Let $0<\varepsilon<t_0<T<\infty$ be arbitrary. We define an affine $ {\psi^1_{\varepsilon,t_0}}$ as follows
\begin{equation*}
 {\psi^1_{\varepsilon,t_0}}(t):=\left\{ \begin{aligned}&0 &&t\in[0,t_0-\varepsilon)\cup [T,\infty),\\
&\frac{t-t_0+\varepsilon}{\varepsilon} &&t\in (t_0-\varepsilon,t_0),\\
&\frac{T-t}{T-t_0} &&t\in (t_0,T).
\end{aligned}\right.
\end{equation*}
Let $\psi_2^n \in \mathcal{D}(\R^d)$ be arbitrary such that $\|\psi_2^n\|_{\infty}\le 1.$ Then we set $\psi(t,x):= {\psi^1_{\varepsilon,t_0}(t)}\psi_2^n(x)$ in \eqref{begin3} (it is a possible test function since we can mollify $\psi_1$ and then pass to the limit). Hence, using for simplicity the notation $f_{\ell,m}(t,x,\lambda):=f(t,x,\lambda)+
\varphi_{\ell,m}(\lambda)$ and $Q(\lambda,\mu)=\sgn(\lambda-\mu)(A(\lambda)-A(\mu))$
\begin{equation*}
\begin{split}
&\frac{1}{T-t_0}\int_{t_0}^T\int_{\R^N}\langle |\eta(x,\lambda)-\eta(x,\mu)|, \nu_{(t,x)}(\lambda)\otimes
\sigma_{(t,x)}(\mu)\rangle \psi^n_2(x)\; dx\; dt \\
& \le\frac{1}{\varepsilon}\int_{t_0-\varepsilon}^{t_0}\int_{\R^N}\langle |\eta(x,\lambda)-\eta(x,\mu)|, \nu_{(t,x)}(\lambda)\otimes
 \sigma_{(t,x)}(\mu)\rangle \psi^n_2(x)\; dx \; dt\\
&+ \int_{t_0-\varepsilon}^{T}\int_{\R^N}\langle Q(\lambda,\mu), \nu_{(t,x)}(\lambda)
\otimes \sigma_{(t,x)}(\mu)\rangle \cdot \nabla \psi^n_2(x)\psi_1(t) \; dx \; dt\\
&+ \int_{t_0-\varepsilon}^{T}\int_{\R^N}\langle \sgn(\lambda-\mu)\left(f_{\ell,m}(t,x,\lambda)-f_{\ell,m}(t,x,\mu))\right),
 \nu_{(t,x)}(\lambda)
\otimes \sigma_{(t,x)}(\mu)\rangle \cdot \psi^n_2(x) {\psi^1_{\varepsilon,t_0}} \; dx \; dt\\
\end{split}
\end{equation*}
Our goal is to let $\varepsilon \to 0_+$, and next $t_0\to0_+$. Because of the initial condition  {(see \eqref{ic})} and continuity of the solution in appropriate topology the first term on the right-hand side above will vanish. Considerations concerning the left-hand side and  {the second term} on the right-hand side follow the same lines as in  \cite{BuGwSw2013}.
 {There is no problem to pass to the limit in the term with $f_{\ell,m}$}. 
 {For arbitrary $\psi^n_2 \in \mathcal{D}(\R^N)$ such that $\|\psi_2^n\|_{\infty}\le 1$ and any $T>0$ at the limit we find} 
\begin{equation}
\begin{split}
&\int_{0}^T\int_{\R^N}\langle |\eta(x,\lambda)-\eta(x,\mu)|, \nu_{(t,x)}(\lambda)\otimes
\sigma_{(t,x)}(\mu)\rangle \psi^n_2(x)\; dx\; dt \\
&\le T\int_{0}^{T}\int_{\R^N}\langle |\bQ(\lambda,\mu)|, \nu_{(t,x)}(\lambda)
\otimes \sigma_{(t,x)}(\mu)\rangle |\nabla \psi^n_2(x)| \; dx \; dt\\
&+T {\int_{0}^{T}}\int_{\R^N}\langle \sgn(\lambda-\mu)\left(f_{\ell,m}(t,x,\lambda)-f_{\ell,m}(t,x,\mu))\right),
 \nu_{(t,x)}(\lambda)
\otimes \sigma_{(t,x)}(\mu)\rangle \cdot \psi_1(t)\psi^n_2(x) \; dx \; dt.
\end{split}\label{begin31}
\end{equation}
 {Where $\psi_1(t)=1-\frac{t}{T}$ for $t\in[0,T]$.}
Note that the term on the left-hand side is nonnegative. Because of the growth conditions that were assumed on $A$, we conclude that $\langle |Q(\lambda,\mu)|,\nu_{(t,x)}(\lambda)\otimes\sigma_{(t,x)}(\mu)\rangle\in L^1(0,T;L^p(\R^N))$. Finally, we define a monotone  sequence $\psi^n_2\nearrow1$ of smooth nonnegative compactly supported functions as $\psi^n_2(x):=1$ in $B(0,n)$, $\psi^n_2(x):=0$ for
$x\in \R^N\setminus B(0,2n)$ such that $|\nabla \psi^n_2|\le
\frac{c}{n}$. For handling the flux term one immediately observes that
$$
\int_{\R^N}|\nabla \psi^n_2|^{q}\; dx \le C \qquad \textrm{ for all } q\ge N,
$$
and \begin{equation}
|\nabla \psi^n|\rightharpoonup^* 0 \textrm{ weakly}^* \textrm{ in } L^{\infty}(0,T; L^q(\R^N)) \qquad \textrm{ for all } q\ge N,\label{2Lp}
\end{equation} 
which is enough that this term vanishes. 
With the monotone convergence
theorem we conclude that
\begin{equation}
\begin{split}
0\le \int_{0}^{T}\int_{\R^N}\langle \sgn(\lambda-\mu)\left(f_{\ell,m}(t,x,\lambda)-f_{\ell,m}(t,x,\mu))\right),
 \nu_{(t,x)}(\lambda)
\otimes \sigma_{(t,x)}(\mu)\rangle \psi_1(t)  \; dx \; dt\le 0.
\end{split}\label{begin4}
\end{equation}
Because of the strict dissipativity of the function $f_{\ell,m}$  the last inequality is strict except of the diagonal. Since the left-hand side is nonnegative,   
there exists a function 
\begin{equation}
v\in L^\infty(\R_+\times \R^N)
\end{equation}
such that 
\begin{equation}
\nu_{t,x}=\sigma_{t,x}=\delta_{v(t,x)}\quad\mbox{for a.a } (t,x)\in\Rdp.
\end{equation}
Hence we conclude that for each $\ell,m$ there exists an entropy weak solution. 

{\bf Step 3.} In the final step we will pass with $\ell,m\to\infty$. Let then $v_{\ell,m}$ and 
$v_{\ell,m'}$ be entropy weak  solutions to the problems with a righ-hand side 
$f+\varphi_{\ell,m}$ and $f+\varphi_{\ell,m'}$ respectively with $m'>m$. We will now use inequality \eqref{begin2} for the solutions $v_{\ell,m}$ and 
$v_{\ell,m'}$. We proceed with choosing a test
function in the same way and limit passage with $\varepsilon\to0_+$ and next $t_0\to0_+$ as in the previous step. Hence
\begin{equation}
\begin{split}
0&\le\int_{\Rdp} (\eta(x,v_{\ell,m})-\eta(x,v_{\ell,m'}))^+\; dx\; dt\\
&\quad \le T \int_{\Rdp} \chi_{\{v_{\ell,m}>v_{\ell,m'}\}}(f_{\ell,m}(t,x,v_{\ell,m})-f_{\ell,m'}(t,x,v_{\ell,m'}))
\psi_1(t)\;dx\;dt\\
&\quad  + T\int_{\{(t,x):v_{\ell,m}=v_{\ell,m'}\}} (f_{\ell,m}(t,x,v_{\ell,m})-f_{\ell,m'}(t,x,v_{\ell,m'}))^+\psi_1(t)\;dx\;dt. 
\label{be}
\end{split}
\end{equation}
The second term on the right-hand side can be neglected since
\begin{equation}\begin{split}
\chi_{\{v_{\ell,m}=v_{\ell,m'}\}} &(f_{\ell,m}(t,x,v_{\ell,m})-f_{\ell,m'}(t,x,v_{\ell,m'}))^+
\\
&=\chi_{\{v_{\ell,m}=v_{\ell,m'}\}}\left(f_{\ell,m'}(t,x,v_{\ell,m})
+\left(\frac 1{m'}-\frac 1m\right) \arctan(v_{\ell,m}^+)-f_{\ell,m'}(t,x,v_{\ell,m'})\right)^+\\
&=\chi_{\{v_{\ell,m}=v_{\ell,m'}\}}\left(\left(\frac 1{m'}-\frac 1m\right) \arctan(v_{\ell,m}^+)\right)^+=0.
\end{split}\end{equation}
Observe now the first term on the right-hand side 
\begin{equation}\begin{split}
&\chi_{\{v_{\ell,m}>v_{\ell,m'}\}}(f_{\ell,m}(t,x,v_{\ell,m})-f_{\ell,m'}(t,x,v_{\ell,m'}))\\
&\quad=\chi_{\{v_{\ell,m}>v_{\ell,m'}\}}(f_{\ell,m'}(t,x,v_{\ell,m})
+\left(\frac 1{m'}-\frac 1m\right) \arctan(v_{\ell,m}^+)-f_{\ell,m'}(t,x,v_{\ell,m'}))\\
&\quad\le\chi_{\{v_{\ell,m}>v_{\ell,m'}\}}(f_{\ell,m'}(t,x,v_{\ell,m})
-f_{\ell,m'}(t,x,v_{\ell,m'}))\le0
\end{split}\end{equation}
where the last inequality holds since $m'>m$, $\arctan(v_{\ell,m}^+)\ge0$ and the function $f_{\ell,m'}$ is 
dissipative. 
Therefore, since $\psi_1(t)$ is nonnegative,  then
\begin{equation}
\chi_{\{v_{\ell,m}>v_{\ell,m'}\}}(f_{\ell,m'}(t,x,v_{\ell,m})
-f_{\ell,m'}(t,x,v_{\ell,m'}))=0 \quad{\rm a.e.\ in}\ \R_+\times\R^N.
\end{equation}
Strict dissipativity of $f_{\ell,m'}$ allows to conclude that  
\begin{equation}
v_{\ell,m}\le v_{\ell,m'}
\end{equation}
 In the same manner, choosing $\ell'>\ell$ one shows that 
 \begin{equation}\label{ell}
 v_{\ell',m}\le v_{\ell,m},
 \end{equation}
  where 
$ v_{\ell',m},  v_{\ell,m}$ are entropy weak  solutions to the problems with a righ-hand side 
$f_{\ell',m}$ and $f_{\ell,m}.$ 
We will pass to the limit with $m\to\infty$ and then with $\ell\to\infty$. The monotonicity provides that  for each $\ell$ there exists a limit   $v_\ell$
such that 
\begin{equation}
v_{\ell,m}\to v_\ell\quad {\rm a.e.\ in}\ \R_+\times\R^N.
\end {equation}
Hence, if we denote $v_{\ell'}$ a limit of a sequence $v_{\ell',m}$, then from \eqref{ell} we conclude that 
\begin{equation}
v_\ell\le v_{\ell'}
\end{equation} 
for $\ell'>\ell$. Hence as $\ell\to\infty$
\begin{equation}
v_{\ell}\to v\quad {\rm a.e.\ in}\ \R_+\times\R^N.
\end {equation}
\subsection{Uniqueness of  entropy weak solutions}
 {Using the local comparison principle of Lemma \ref{22} we obtain uniqueness of the entropy weak solution. Let us assume that $u_1$ and $u_2$ are entropy weak solutions to \eqref{E1} in the sense of Definiton \ref{weak}. Then we take $\psi=\psi^1_{\varepsilon,t_0}(t)\psi_2^n(x)$ as a test function in \eqref{begin1}, where $\psi^1_{\varepsilon,t_0}$ and $\psi_2^n$ are defined as in the proof of Theorem \ref{existence} and we repeat the argumentation of this proof, Step 2 to pass to the limit with $\varepsilon\rightarrow 0^+$ and $t_0\downarrow 0$ using the initial condition. Finally we choose $\psi_2^n(x)$ to be a smooth approximation of $\chi_{\mathbb{R}^N}$ and pass to the limit with $n\rightarrow\infty$ repeating the arguments of the proof of Theorem \ref{existence}, Step 2.}  

\appendix

\section{Equivalent notions of entropy solutions}\label{EqN}
In this section we concentrate on relations between different notions of entropy weak solutions for the flux function $\Phi$ in a form $\Phi(x,u)=\bG(\theta(x,u))$ with $\bG, \theta$ satisfying  { (H1)}--{(H3)} with an additional condition that $\bF$  is sufficiently regular in both variables. This relations play an important role on the level of approximations, namely after passing from discontinuous flux to sufficiently smooth one. We formulate the lemma collecting the relations between different notions of solutions.

\begin{lemma}\label{equiv}
Let $ \Phi, f$ satisfy the assumptions (H1)--(H4) and assume that
$\bG\in{\cal C}^1(\R)$,  $\theta$ is continuous in $u$ and continuously differentiable in $x$. 
Assume that $u\in L^{\infty}_{loc}(\mathbb{R}\times\R^N)$ is given and define
\begin{align}
v(t,x)&:=\theta(x,u(t,x)), \label{v}\\
\end{align}
Then the following  statements are equivalent.\\
{\bf(N1)} For all $k\in \mathbb{R}$ and all nonnegative $\psi\in \mathcal{D}(\R\times\mathbb{R}^{N})$ there holds
    \begin{equation}\label{Kruzw}
    \begin{split}
    &\int_{\R_+\times\R^N}|u(t,x)-k|\psi_{t}(t,x) -\sgn(u(t,x)-k)\div \Phi(x,k)\psi(t,x)\; dx \; dt\\
    &\qquad +\int_{\R_+\times\R^N}\sgn(u(x,t)-k) (\Phi(x,u(x,t))-\Phi(x,k))\cdot \nabla \psi(x,t)\; dx \; dt\\
    &\qquad +\int_{\R_+\times\R^N}\sgn (u(t,x)-k) f(t,x,u)\psi \;dx\;dt+\int_{\R^N}|u_0(x)-k|\psi(0,x)\; dx\ge0.
    \end{split}
    \end{equation}
 {\bf(N2)} For all $k\in \mathbb{R}$ and all nonnegative $\psi\in \mathcal{D}(\R\times\mathbb{R}^{N})$ there holds
\begin{equation}
\begin{split}\label{Panovw}
&\int_{\R_+\times\R^N}|\eta(x,v(t,x))-\eta(x,k)|\psi_{t}(t,x)\; dx \; dt\\
&\qquad+\int_{\R_+\times\R^N}\sgn(v(t,x)-k)(\bG(v(t,x))-\bG(k))\cdot \nabla \psi(t,x) \; dx \; dt\\
&\qquad +\int_{\R_+\times\R^N}\sgn (v(t,x)-k) f(t,x,\eta(x,v))\psi \;dx\;dt+\int_{\R^N}|u_0(x)-\eta(x,k)|\psi(0,x) dx\ge 0.
\end{split}
\end{equation}
\end{lemma}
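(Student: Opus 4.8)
The plan is to recognise (N1) and (N2) as the same Kru\v zkov entropy inequality written in the two pointwise-equivalent variables $u$ and $v=\theta(x,u)$, linked by the continuous increasing map $u=\eta(x,v)$. Everything rests on three identities that hold because $\theta(x,\cdot)$ and $\eta(x,\cdot)$ are monotone: $\sgn(u-\eta(x,k))=\sgn(\theta(x,u)-k)=\sgn(v-k)$, $\Phi(x,u)=A(v)$, and $\Phi(x,\eta(x,k))=A(k)$. With these one sees that the integrands of (N2) are exactly those of a Kru\v zkov inequality for $u$ whose comparison state is the stationary profile $c_k(x):=\eta(x,k)$: since $\Phi(x,c_k(x))=A(k)$ is $x$-independent we have $\div\Phi(x,c_k(x))=0$, so $c_k$ solves the steady homogeneous equation. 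Thus (N2) is the family of adapted (stationary-state) entropy inequalities and (N1) is the family of Kru\v zkov inequalities with genuine constants, the term $-\sgn(u-k)\,\div\Phi(x,k)\,\psi$ being precisely the correction forced by the explicit $x$-dependence of $\Phi$.

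First I would record, in the smooth setting, the identity $\sgn(u-k)\,\div\Phi(x,u)=\div\big[\sgn(u-k)(\Phi(x,u)-\Phi(x,k))\big]+\sgn(u-k)\,(\div\Phi)(x,k)$ (the jump of $\sgn$ contributes nothing, the flux difference vanishing on $\{u=k\}$) and its analogue for the $v$-equation $\eta(x,v)_t+\div A(v)=f(t,x,\eta(x,v))$, in which no correction appears because $A$ has no explicit $x$-dependence. These reduce the lemma to the single assertion that comparison against all constants is equivalent to comparison against all stationary profiles $c_k$. To pass between the two families I would localise: fix $\psi\ge 0$, cover $\supp\psi$ by small balls with centres $x_j$, take a linearly precise partition of unity $\sum_j\rho_j=1$ subordinate to the cover, and test with $\psi\rho_j$. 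For (N2)$\Rightarrow$(N1) at a constant $c$ I would apply (N2) with the $v$-constants $k_j:=\theta(x_j,c)$, so that $A(k_j)=\Phi(x_j,c)$; for (N1)$\Rightarrow$(N2) at a $v$-constant $k$ I would apply (N1) with the $u$-constants $c_j:=\eta(x_j,k)$. The time-derivative and initial terms converge immediately, since $|u-c_j|\to|\eta(x,v)-\eta(x,k)|$ as the mesh shrinks, and after using the monotonicity identities the principal flux term reproduces the correct $\sgn(v-k)(A(v)-A(k))\cdot\nabla\psi$.

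The main obstacle is the limit of the two remaining pieces: the partition-gradient term $\sum_j\int\sgn(\cdot)(\Phi(x,u)-\Phi(x,c_j))\cdot\nabla\rho_j\,\psi$ and the correction term $\mp\sum_j\int\sgn(\cdot)\,(\div\Phi)(x,\cdot)\,\psi\rho_j$. Using $\sum_j\nabla\rho_j=0$, the first-order consistency $\sum_j(x_j-x)\otimes\nabla\rho_j\to\mathrm{Id}$ of a linearly precise partition, and the Taylor expansion $\Phi(x,c_j)-\Phi(x,c(x))\approx\partial_w\Phi(x,c(x))\,\big(\nabla c(x)\cdot(x_j-x)\big)$, the partition-gradient term converges to $-\int\sgn(\cdot)\,\partial_w\Phi(x,c(x))\cdot\nabla c(x)\,\psi$. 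For the stationary profile the \emph{total} divergence of $\Phi(x,c(x))=A(k)$ vanishes, i.e. $(\div\Phi)(x,c(x))=-\partial_w\Phi(x,c(x))\cdot\nabla c(x)$; hence in the direction (N1)$\Rightarrow$(N2) the correction term cancels the partition-gradient term exactly, while in (N2)$\Rightarrow$(N1), where (N2) carries no correction, the same computation \emph{produces} the missing term $-\int\sgn(u-c)\,(\div\Phi)(x,c)\,\psi$ of (N1). Here I use $A\in\mathcal C^1$ and $\theta\in\mathcal C^1$ in $x$ (with $\partial_{x_i}\theta$ continuous in $u$) to justify the expansion and the convergence $\div\Phi(x,c_j)\to\div\Phi(x,c(x))$. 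A last technical point is the discontinuity of $\sgn$ at the comparison level: I would prove both implications first for $k$, resp. $c$, in a dense set whose level sets are Lebesgue-negligible, controlling $\sgn(u-c_j)\to\sgn(u-c(x))$ by dominated convergence off this set, and then extend to all values using the continuity in $k$ of every term of (N1) and (N2).
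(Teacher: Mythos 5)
Your route is genuinely different from the paper's. The paper proves both implications by invoking the Kato inequality of \cite{GaMa1996} (a doubling-of-variables contraction statement for two solutions with different right-hand sides) and then substituting an explicit second ``solution'': for {\bf(N1)}$\Rightarrow${\bf(N2)} the stationary profile $u_2=\eta(x,k)$, which solves the homogeneous equation because $\Phi(x,\eta(x,k))=\bG(k)$ is divergence-free, and for {\bf(N2)}$\Rightarrow${\bf(N1)} the profile $v_2=\theta(x,k)$, which solves the $v$-equation with source $f_2=\div\Phi(x,k)$ --- exactly the correction term of {\bf(N1)}. You exploit the same two structural observations but replace the Kato inequality by a localization/freezing argument with a linearly precise partition of unity; this is more elementary (no doubling of variables is needed for this lemma) at the price of a delicate cancellation between the partition-gradient term and the $\div\Phi(x,k)$ term. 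The countability argument for the exceptional levels $k$, and the extension to all $k$ by one-sided limits, appear in both treatments.

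There is, however, one step in your {\bf(N1)}$\Rightarrow${\bf(N2)} direction that fails as written. You expand $\Phi(x,c_j)-\Phi(x,c(x))\approx\partial_w\Phi(x,c(x))\,(\nabla c(x)\cdot(x_j-x))$ and use the identity $(\div\Phi)(x,c(x))=-\partial_w\Phi(x,c(x))\cdot\nabla c(x)$. Under the hypotheses of the lemma neither object need exist: $\theta$ is only \emph{continuous} in $u$, so $\partial_w\Phi=\partial_w\bG(\theta(x,w))$ need not be defined, and $c(x)=\eta(x,k)$ is the inverse of $\theta$ in $u$, which can fail to be Lipschitz, let alone differentiable (take $\theta(x,u)=u^3+x$, for which $\nabla_x\eta(x,k)$ blows up at $x=k$). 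The repair is to expand through the composition instead: $\Phi(x,c_j)-\bG(k)=\bG(\theta(x,c_j))-\bG(k)$ with $\theta(x,c_j)-k=\theta(x,c_j)-\theta(x_j,c_j)=\nabla_x\theta(\xi,c_j)\cdot(x-x_j)$, which uses only $\bG\in\mathcal{C}^1$ and the $\mathcal{C}^1$-dependence of $\theta$ on $x$, and likewise $(\div\Phi)(x,c(x))=\bG'(k)\cdot\nabla_x\theta(x,\eta(x,k))$; the cancellation you want then goes through. (In your {\bf(N2)}$\Rightarrow${\bf(N1)} direction the analogous expansion is $\bG(\theta(x,c))-\bG(\theta(x_j,c))$ and is already in the safe form.) With this modification, and with the case $\sgn(u-c_j)\neq\sgn(u-c)$ confined to $\{|v-k|\le\epsilon(h)\}$ and controlled by the null level set for non-exceptional $k$, your argument is complete.
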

\begin{proof} 
%
To show {\bf (N1)} $\Rightarrow$ {\bf (N2)} consider
the equation
$$(u_i)_{t}+\div \Phi(x,u_i)=f_i(t,x,u_i),  \quad i=1,2.$$
For any two entropy weak  solutions $u_1,u_2$ the so-called Kato inequality holds
\begin{equation}\label{Kato}
\begin{split}
|u_1-u_2|_{t}+\div \left(\sgn(u_1-u_2)(\Phi(x,u_1)-\Phi(x,u_2))\right)\\
\le \sgn(u_1-u_2)(f_1-f_2)+|f_1-f_2|\chi_{\{u_1=u_2\}}
\end{split}
\end{equation}
in ${\mathcal D}'(\Rdp)$, cf.~Ref.~\cite{GaMa1996}.
Choosing in \eqref{Kato} $u_1=\eta(x,v_1)$ and $u_2=\eta(x,k)$ with $f_1= f,\, f_2\equiv 0$.
Note that the set of $k\in\R$ such that $|\{(t,x) : u_1(t,x)=\eta(x,k)\}|>0$ is at most countable and hence
it allows to pass from
\eqref{Kato} to {\bf (N2)}.

For showing the opposite direction let us consider  the problem with $f_i:\Rdp\times\R\to\R$ satisfying
Lipschitz condition with respect to the last variable.
\begin{equation}\label{eta-f}
\eta(x,v_i)_t+\div \bG(v_i)=f_i(t,x,\eta(x,v_i)), \quad i=1,2.
\end{equation}
For any entropy weak solutions $v_1, v_2$ in the sense of {\bf (N2)} it holds, cf.~\eqref{begin2}
\begin{equation}\label{Kato-eta}
\begin{split}
|\eta(x,v_1)-\eta(x,v_2)|_{t}+\div \left(\sgn(v_1-v_2)(\bG(v_1)-\bG(v_2))\right)\\
\le \sgn(v_1-v_2)(f_1-f_2)+|f_1-f_2|\chi_{\{v_1=v_2\}}
\end{split}
\end{equation}
in ${\mathcal D}'(\Rdp)$.
For passing from \eqref{Kato-eta} to {\bf (N1)} we choose again $u_1=\eta(x,v_1)$
and now  $v_2=\theta(x,k)$ with
$f_1= f$ and $f_2=\div \bG(v_2)=\div \Phi(x,k)$. Note again that the set of $k\in\R$ such that 
$|\{(t,x) : v_2(t,x)=\theta(x,k)\}|>0$ is at most countable and we pass to {\bf (N1)} what completes the proof.

\end{proof}
%
\begin{proposition}\label{monotone-conv}
Let $[a,b]\subset\R$ and let $f$ be continuous,  $f,f_n$ be monotone functions such that
$f_n\to f$ pointwisely. Then $f_n\to f$ uniformly on $[a,b]$. 
\end{proposition}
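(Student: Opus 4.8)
The plan is to prove a Dini--P\'olya type statement: pointwise convergence of monotone functions to a \emph{continuous} limit on a compact interval is automatically uniform. Without loss of generality I would assume all the functions involved are nondecreasing, since the nonincreasing case is entirely symmetric; indeed, for each fixed $n$ only the monotonicity of that particular $f_n$ is used in the argument, so a family with mixed directions is handled verbatim once each member's direction is taken into account.

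First I would exploit the continuity of the limit. Since $f$ is continuous on the compact interval $[a,b]$, it is uniformly continuous there. Fixing $\varepsilon>0$, I would choose a finite partition $a=x_0<x_1<\dots<x_m=b$ fine enough that $|f(x_{i+1})-f(x_i)|<\varepsilon$ for every $i$. This is the decisive step: the partition is dictated by the limit function $f$, not by the approximating sequence. Next I would invoke the pointwise convergence, but only at the finitely many partition points: there exists $N$ such that for all $n\ge N$ and all $i\in\{0,\dots,m\}$ one has $|f_n(x_i)-f(x_i)|<\varepsilon$.

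Then monotonicity transfers this control from the finite set of nodes to the whole interval. Given an arbitrary $x\in[a,b]$, pick $i$ with $x\in[x_i,x_{i+1}]$. Monotonicity of $f_n$ gives $f_n(x_i)\le f_n(x)\le f_n(x_{i+1})$, and monotonicity of $f$ gives $f(x_i)\le f(x)\le f(x_{i+1})$. From $f_n(x)\le f_n(x_{i+1})<f(x_{i+1})+\varepsilon$ together with $f(x_{i+1})-f(x)\le f(x_{i+1})-f(x_i)<\varepsilon$ I obtain $f_n(x)<f(x)+2\varepsilon$; symmetrically, from $f_n(x)\ge f_n(x_i)>f(x_i)-\varepsilon$ and $f(x)-f(x_i)\le f(x_{i+1})-f(x_i)<\varepsilon$ I obtain $f_n(x)>f(x)-2\varepsilon$. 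Hence $|f_n(x)-f(x)|<2\varepsilon$ for all $x\in[a,b]$ and all $n\ge N$, and letting $\varepsilon\to 0$ yields uniform convergence.

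I expect no serious obstacle here; the only point requiring care is the insistence that the partition be extracted from the continuity of the limit $f$ (so that the increments $f(x_{i+1})-f(x_i)$ are uniformly small), after which the monotonicity of the $f_n$ does all the work of propagating the estimate from the finitely many nodes to every point of the interval. Compactness enters solely to guarantee that a finite partition suffices and that $f$ is uniformly continuous.
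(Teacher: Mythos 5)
Your proof is correct and is the standard argument for this classical fact (uniform continuity of the continuous limit gives a fine partition, pointwise convergence controls the finitely many nodes, and monotonicity of each $f_n$ propagates the estimate to the whole interval); your remark that mixed monotonicity directions among the $f_n$ cause no trouble is also accurate, since the interpolation step only uses the monotonicity of the individual $f_n$ between consecutive nodes. The paper itself does not prove this proposition --- it dismisses it as an elementary exercise and cites a textbook --- so your write-up simply supplies the expected argument, and there is nothing to compare beyond noting that it is the canonical one.
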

The above fact in an elementary exercise. For the proof see e.g.~\cite{Amann}.

\begin{proposition}\label{inverse-conv}
Let $f_n:\R\to\R$,   ${\rm Im}\, (f_n)=\R,$  $f_n$ be strictly monotone functions. 
Let $f$ be a maximal monotone mapping with ${\rm Im}\, (f)=\R$ and let the inverse mapping $f^{-1}$ be continuous and
$f_n\to f$ a.e.. Then the inverse functions  converge locally uniformly to the inverse of the limit, namely
$(f_n)^{-1}\to f^{-1}$ uniformly on every compact subset of $\R$. 
\end{proposition}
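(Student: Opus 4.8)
The plan is to reduce the statement to the pointwise convergence of the inverse functions and then invoke Proposition~\ref{monotone-conv}, which promotes pointwise convergence of monotone functions to a continuous monotone limit into locally uniform convergence. As a preliminary observation I would note that a strictly monotone function whose image is the connected set $\R$ cannot have jumps (a jump would leave a gap in the image), so each $f_n$ is in fact a continuous strictly monotone bijection of $\R$ and $(f_n)^{-1}$ is a genuine continuous strictly monotone function on all of $\R$; moreover, since $f$ is (maximal) monotone and nonconstant, we may assume all the $f_n$ are increasing. Writing $g_n:=(f_n)^{-1}$ and $g:=f^{-1}$, these are nondecreasing functions defined on all of $\R$ (the domain being $\R$ because ${\rm Im}(f)=\R$), and $g$ is continuous by hypothesis. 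Note also that continuity, hence single-valuedness, of $g=f^{-1}$ forces the monotone graph of $f$ to contain no horizontal segment, i.e.\ $f$ is strictly increasing as a graph (a flat part of $f$ on an interval would make $f^{-1}$ multivalued there).

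The heart of the argument is the pointwise convergence $g_n(y_0)\to g(y_0)$ for every fixed $y_0\in\R$. First I would set $x_0:=g(y_0)$, so that $y_0\in f(x_0)$, and fix $\varepsilon>0$. The exceptional set consisting of the at most countable set of multivalued points of $f$ together with the null set on which $f_n$ fails to converge has measure zero, so I can choose points $x_-\in(x_0-\varepsilon,x_0)$ and $x_+\in(x_0,x_0+\varepsilon)$ at which $f$ is single-valued and $f_n(x_\pm)\to f(x_\pm)$. Because $f$ has no horizontal segment, the ordering $x_-<x_0<x_+$ yields the strict inequalities $f(x_-)<y_0<f(x_+)$. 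Consequently, for all $n$ large enough, $f_n(x_-)<y_0<f_n(x_+)$, and applying the increasing inverse $g_n$ gives $x_-<g_n(y_0)<x_+$, whence $|g_n(y_0)-x_0|<\varepsilon$. Since $\varepsilon$ was arbitrary, $g_n(y_0)\to g(y_0)$.

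With pointwise convergence of the nondecreasing functions $g_n$ to the continuous nondecreasing limit $g$ in hand, Proposition~\ref{monotone-conv} applies directly and yields $g_n\to g$, that is $(f_n)^{-1}\to f^{-1}$, uniformly on every compact subset of $\R$, which is the assertion.

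I expect the main obstacle to be the careful bookkeeping around the multivaluedness of $f$: one must argue that the points $x_\pm$ can be chosen away from both the countable set of multivalued points and the null set of non-convergence, and then convert the single-valued strict inequalities $f(x_-)<y_0<f(x_+)$ --- which rely on $f$ having no flat part, a consequence of the continuity of $f^{-1}$ --- into the two-sided pinching of $g_n(y_0)$. Everything else is routine once this pointwise convergence is established.
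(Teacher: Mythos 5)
Your proof is correct and rests on the same ingredients as the paper's: reduce to pointwise convergence of the inverses, use the countability of the multivalued points of $f$ together with the a.e.\ convergence to pick good evaluation points $x_\pm$ near $x_0=f^{-1}(y_0)$, exploit the strict monotonicity of $f$ (forced by the single-valuedness of $f^{-1}$) to trap $y_0$ strictly between $f(x_-)$ and $f(x_+)$, and finish with Proposition~\ref{monotone-conv}. The only difference is presentational: you argue directly (sandwiching $g_n(y_0)$ in $(x_-,x_+)$), whereas the paper phrases the same mechanism as a proof by contradiction along a subsequence.
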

\begin{proof}
We provide the proof by contradiction. Assume that $f_n$ converges a.e. to $f$ and that
$(f_n)^{-1}$ does not converge pointwisely to $f^{-1}$. Hence there exist $y, \varepsilon>0$ and
a~subsequence $(f_{n_k})^{-1}$ such that 
\begin{equation}\label{notin}
(f_{n_k})^{-1}(y)\notin[f^{-1}(y)-\bar\varepsilon,f^{-1}(y)+\bar\varepsilon].
\end{equation} 
for every $0<\bar\varepsilon<\varepsilon$.

\noindent
 {\it Case 1}:  $(f_{n_k})^{-1}(y)> f^{-1}(y)+\bar\varepsilon$.  Let $z=f^{-1}(y)$, hence $f(z)\ni y$ and there exists a selection 
$f^*$ such that $y=f^*(z)$. Define 
 $\bar y_{n_k}:=f_{n_k}^{-1}(f^*(z))$. By~\eqref{notin} we have the estimate 
$$\bar y_{n_k}>z+\bar\varepsilon.$$
Using the strict monotonicity of $f$ (hence obviously also $f^*$), monotonicity of $f_{n_k}$ and the definition of $\bar y_{n_k}$ we conclude an existence of 
$\delta$  such that for every $\bar\varepsilon\in(\frac{\varepsilon}{2},\varepsilon)$
\begin{equation}
0<\delta\le f^*(z+\bar\varepsilon)-f^*(z)=
f^*(z+\bar\varepsilon)-f_{n_k}(\bar y_{n_k})\le
f(z+\bar\varepsilon)-f_{n_k}(z+\bar\varepsilon).
\label{contra}\end{equation}
Since the number of points where $f$ is not continuous is  countable, it is always possible to choose such $\bar\varepsilon$ that $z+\bar\varepsilon$ is the point where $f$ is continuous (single valued). Hence for such $\bar\varepsilon$ \eqref{contra} contradicts the  convergence of $f_n$. 

\noindent
 {\it Case 2}:  $(f_{n_k})^{-1}(y)< f^{-1}(y)-\bar\varepsilon$.  Let again $z=f^{-1}(y)$, and  $y=f^*(z)$. Define 
 $\bar y_{n_k}:=f_{n_k}^{-1}(f^*(z))$ and observe that  
$$\bar y_{n_k}<z-\bar\varepsilon.$$
Again we conclude an existence of 
$\delta$  such that for every $\bar\varepsilon\in(\frac{\varepsilon}{2},\varepsilon)$
\begin{equation}
0<\delta\le f^*(z)-f^*(z-\bar\varepsilon)=
f_{n_k}(\bar y_{n_k})-f^*(z-\bar\varepsilon)\le
f_{n_k}(z-\bar\varepsilon)-f(z-\bar\varepsilon).
\label{contra2}\end{equation}
and we conclude in the same way as in the previous case. Hence $(f_n)^{-1}$  converges pointwisely to $f^{-1}$. 
The uniform convergence of $(f_n)^{-1}$ can be concluded by Proposition~\ref{monotone-conv}. 

\end{proof}

\bigskip

\noindent
{\bf Acknowledgement} \\
P. G. thanks to the National Science Centre, 
project no. 6085/B/H03/2011/40. The project of  
A. \'S.-G.  was financed by the  National Science Centre, DEC-2012/05/E/ST1/02218.
All the authors participate in the project Research Group Linkage of Alexander von Humboldt Foundation.

\bibliographystyle{abbrv}
\bibliography{hyper}

\noindent
{\sc Piotr Gwiazda}\\
Institute of Applied Mathematics and Mechanics,\\
University of Warsaw, Banacha 2, 02-097 Warsaw, Poland\\
{\tt pgwiazda}@{\tt mimuw.edu.pl}\\[2ex]
{\sc Agnieszka \'Swierczewska-Gwiazda}\\
Institute of Applied Mathematics and Mechanics,\\
University of Warsaw,
Banacha 2, 02-097 Warsaw, Poland\\
{\tt aswiercz}@{\tt mimuw.edu.pl}\\[2ex]
 {
{\sc Petra Wittbold}\\
Faculty of Mathematics\\
University of Duisburg-Essen\\
45117 Essen\\
{\tt petra.wittbold}@{\tt uni-due.de}\\[2ex]
{\sc Aleksandra Zimmermann}\\
Faculty of Mathematics\\
University of Duisburg-Essen\\
45117 Essen\\
{\tt aleksandra.zimmermann}@{\tt uni-due.de}}

\end{document}